\documentclass[a4paper,11pt]{article}
\usepackage[margin=1.0in]{geometry}

\usepackage{hyperref}
\usepackage{algorithm,algorithmic}
\usepackage{amsmath}
\usepackage{amsthm}
\usepackage{amsfonts}
\usepackage{latexsym}
\usepackage{amssymb}
\usepackage{xspace}
\usepackage{color}
\usepackage{fullpage}
\usepackage{epsfig}
\usepackage{tikz}
\usepackage{array}
\usepackage{comment}

\newtheorem{theorem} {Theorem}
\newtheorem{lemma} {Lemma}
\newtheorem{definition} {Definition}
\newtheorem{corollary} {Corollary}
\newtheorem{observation} {Observation}

\newcommand{\mF}{\mathcal{F}}
\newcommand{\mV}{\mathcal{V}}

\newcommand{\mP}{\mathcal{P}}
\newcommand{\mK}{\mathcal{K}}
\newcommand{\mS}{\mathcal{S}}

\newcommand{\mM}{\mathcal{M}}

\newcommand{\onesvec}{\textbf{1}}

\newcommand{\card}{\textrm{card}}

\newcommand{\reals}{\mathbb{R}}

\title{Linear-memory and Decomposition-invariant \\
Linearly Convergent Conditional Gradient Algorithm\\
 for Structured Polytopes}
\date{}
\author{Dan Garber and Ofer Meshi\\
Toyota Technological Institute at Chicago \\ 
\small{\{dgarber, meshi\}@ttic.edu}
}
\begin{document}

\maketitle

\begin{abstract}
Recently, several works have shown that natural modifications of the classical conditional gradient method (aka Frank-Wolfe algorithm) for constrained convex optimization, provably converge with a linear rate when:
i) the feasible set is a polytope, and
ii) the objective is smooth and strongly-convex. However, all of these results suffer from two significant shortcomings:
\begin{enumerate}
\item large memory requirement due to the need to store an explicit convex decomposition of the current iterate, and as a consequence, large running-time overhead per iteration
\item
the worst case convergence rate depends unfavorably on the dimension
\end{enumerate}

In this work we present a new conditional gradient variant and a corresponding analysis that improves on both of the above shortcomings. In particular:
\begin{enumerate}
\item
both memory and computation overheads are only linear in the dimension
\item
in case the optimal solution is sparse, the new convergence rate replaces a factor which is at least linear in the dimension in previous works, with a linear dependence on the number of non-zeros in the optimal solution
\end{enumerate}

At the heart of our method, and corresponding analysis, is a novel way to compute decomposition-invariant \textit{away-steps}.
While our theoretical guarantees do not apply to any polytope, they apply to several important structured polytopes that capture central concepts such as paths in graphs, perfect matchings in bipartite graphs, marginal distributions that arise in structured prediction tasks, and more. Our theoretical findings are complemented by empirical evidence which shows that our method delivers state-of-the-art performance.
\end{abstract}

\section{Introduction}

The efficient reduction of a constrained convex optimization problem to a constrained linear optimization problem is an appealing algorithmic concept, in particular for large-scale problems. The reason is that for many feasible sets of interest, the problem of minimizing a linear function over the set admits much more efficient methods than its non-linear convex counterpart. Prime examples for this phenomenon include various structured polytopes that arise in combinatorial optimization, such as the path polytope of a graph (aka the unit flow polytope), the perfect matching polytope of a bipartite graph, and the base polyhedron of a matroid, for which we have highly efficient combinatorial algorithms for linear minimization that rely heavily on the specific rich structure of the polytope \cite{schrijver}.
At the same time, minimizing a non-linear convex function over these sets usually requires the use of generic interior point solvers that are oblivious to the specific combinatorial structure of the underlying set, and as a result, are often much less efficient.
Another important example includes structured sets of matrices such as the \textit{spectrahedron}, i.e., convex-hull of unit-trace positive semidefinite matrices, or the  \textit{nuclear ball} that are central to many machine learning problems, such as \textit{matrix completion}, for which linear optimization amounts to computing the leading eigenvector or leading pair of singular vectors, whereas, algorithms for non-linear convex optimization over these sets often rely on very expensive singular value decompositions.
Indeed, it is for this reason, that the conditional gradient (CG) 
method (aka Frank-Wolfe algorithm), a method for constrained convex optimization that is based on solving linear subproblems over the feasible domain, has regained much interest in recent years in the machine learning, signal processing and optimization communities. It has been recently shown that the method delivers state-of-the-art performance on many problems of interest, see for instance \cite{Jaggi10, Jaggi13a, Dudik12a, Dudik12b, Hazan12, ShalevShwartz11, Laue12, Ying12, Hazan16, joulin2014fw}.

As part of the regained interest in the conditional gradient method, there is also a recent effort to understand the convergence rates and associated complexities of conditional gradient-based methods, which is in general far less understood than other first-order methods, e.g., the projected gradient method. It is known, already from the first introduction of the method by Frank and Wolfe in the 1950's \cite{FrankWolfe}, and the somewhat later work of Polyak and Levitin \cite{Polyak}, that the method converges with a rate of roughly $O(1/t)$ for minimizing a smooth convex function over a convex and compact set, which matches the rate of the standard projected gradient method for the same setting.
However, it is not clear if this convergence rate improves under an additional standard strong-convexity assumption.
In fact, certain lower bounds, such as in \cite{Lan13,GH13}, suggest that such improvement, even if possible, should come with a worse dependence on the problem's parameters (e.g., the dimension), which is a phenomena that does not occur for the projected gradient method, for instance.
Nevertheless, over the past years, various works tried to design natural variants of the CG method that converge provably faster under the additional strong convexity assumption, or a slightly weaker assumption, without dramatically increasing the per-iteration complexity, which is the main appeal for these methods.
For instance, Gu{\'{e}}Lat and Marcotte \cite{GueLat1986} showed that
a CG variant which uses the concept of \textit{away-steps} converges exponentially fast
in case the objective function is strongly convex, the feasible set is a polytope, and the optimal solution is located in the interior of the set.
A similar result was presented by Beck and Teboulle \cite{BeckTaboule} who considered a specific problem they refer to a \textit{the convex feasibility problem} over an arbitrary convex set. They also obtained a linear convergence rate under the assumption that an optimal solution that is far enough from the boundary of the set exists. In both of these works, the exponent depends on the distance of the optimal solution from the boundary of the set, which in general can be arbitrarily small.
Later, Ahipasaoglu, Sun and Todd \cite{Ahipasaoglu08} showed that in the specific case of minimizing a smooth and strongly convex function over the unit simplex, a variant of the CG method which also uses away-steps, converges with a linear rate. Unfortunately, it is not clear from their analysis how this rate depends on natural parameters of the problem such as the dimension and the condition number of the objective function.

Recently, Garber and Hazan presented the first natural linearly-converging CG variant for polytopes without any restricting assumptions on the location of the optimum. The exponent in their convergence rate depends on various geometric parameters of the polytope \cite{GH13}. It is important to note, that while, in theory, these geometric parameters can result in an arbitrarily bad convergence rate, for polytopes for which it makes sense to apply the CG method, i.e., there exists an highly efficient algorithm to solve the linear subproblems, such as polytopes that arise in combinatorial optimization problems, these parameters are quite reasonable and can be efficiently computed.
In a follow-up work, Lacoste-Julien and Jaggi \cite{Jaggi13c,lacoste2015linear_fw} gave a refined affine-invariant analysis of an algorithm presented in \cite{GueLat1986} which also uses away steps, and showed that it also converges exponentially fast in the same setting as the Garber-Hazan result.
In a later work, Beck and Shtern \cite{Beck15} gave a different, duality-based, analysis for the algorithm of \cite{GueLat1986}, and showed that it can be applied to a wider class of functions than purely strongly convex functions. However, the explicit dependency of their convergence rate on the dimension is suboptimal, compared to \cite{GH13, lacoste2015linear_fw}.
Aside from the polytope case, Garber and Hazan have shown recently that in case the feasible set is strongly-convex and the objective function satisfies certain strong convexity-like proprieties, then the standard CG method converges with an accelerated rate of $O(1/t^2)$ \cite{GH15}.

Despite the exponential improvement in convergence rate in the polytope case obtained in recent results, all of these results suffer from two major drawbacks. First, while in terms of the number of calls per-iteration to the linear optimization oracle, these methods match the standard CG method, i.e., a single call per iteration, the overhead of other operations both in terms of running times and memory requirements is significantly worse. The reason is that in order to apply the so-called away-steps, which all methods use in order to obtain the accelerated rate, they require to maintain at all times an explicit decomposition of the current iterate into vertices of the polytope. Maintaining such a decomposition and computing the away-steps, even with efficient implementations of incremental decomposition procedures, such  as suggested in \cite{Beck15}, require both memory and per-iteration runtime overheads that are at least quadratic in the dimension. This is much worse than the standard CG method, whose memory and runtime overheads are only linear in the dimension. Second, the convergence rate of all previous linearly convergent CG methods depends explicitly on the dimension. While it is known that this dependency is unavoidable in certain cases, e.g., when the optimal solution is, informally speaking, dense (see for instance the lower bound in \cite{GH13}), it is not clear that such an unfavorable dependence is mandatory when the optimum is sparse.

In this paper, we revisit the application of CG variants to smooth and strongly-convex optimization over polytopes. We introduce a new variant which overcomes both of the above shortcomings from which all previous linearly-converging variants suffer. The main novelty of our method, which is the key to its improved performance, is that unlike previous variants, it is decomposition-invariant, i.e., it does not require to maintain an explicit convex decomposition of the current iterate. This principle proves to be crucial both for eliminating the memory and runtime overheads, as well as to obtaining shaper convergence rates for instances that admit a sparse optimal solution. 

We give a detailed comparison of our method to previous art in Table \ref{table:compare}.
We also provide empirical evidence that the proposed method delivers state-of-the-art performance on several tasks of interest.
While our method is less general than previous ones, i.e., our theoretical guarantees do not hold for arbitrary polytopes, they readily apply for many structured polytopes that capture important concepts such as paths in graphs, perfect matchings in bipartite graphs, Markov random fields, and more.
We also specify how to apply the method to arbitrary polytopes, but without giving formal convergence guarantees. 

\begin{table*}\label{table:compare}
\begin{center}
  \begin{tabular}{| l | c | c | c | c |}
    \hline
    Paper &  \#iterations to $\epsilon$ err. & \#LOO calls & runtime & memory \\ \hline
    Frank \& Wolfe \cite{FrankWolfe} & $\frac{\beta{}D^2}{\epsilon}$ & 1& $n$ & $n$  \\ \hline
    Garber \& Hazan \cite{GH13} & $\frac{n\beta{}D^2}{\alpha}\log(1/\epsilon)$ & 1& $n^2$ & $n^2$  \\ \hline
    Lacoste-Julien \& Jaggi   \cite{lacoste2015linear_fw}  & $\frac{n\beta{}D^2}{\alpha}\log(1/\epsilon)$ & 1& $n^2$ & $n^2$   \\ \hline
    Beck \& Shtern \cite{Beck15} & $\frac{n^2\beta{}D^2}{\alpha}\log(1/\epsilon)$ & 1& $n^2$ & $n^2$ \\ \hline
    This paper & $\frac{\card(x^*)\beta{}D^2}{\alpha}\log(1/\epsilon)$ & 2& $n$ & $n$ \\ \hline
  \end{tabular}
  \caption{Comparison with previous works. The third column gives the number of calls to the linear optimization oracle per iteration, fourth column gives the overall additional arithmetic complexity per iteration, and the fifth  column gives the worst case memory requirement of the algorithm. To get lower complexity and memory requirements for the algorithms in \cite{GH13, lacoste2015linear_fw, Beck15}, we assume they all employ an algorithmic version of Carathéodory's theorem to maintain a convex decomposition of the iterate to at most $n+1$ vertices, as fully detailed in \cite{Beck15}. We note that the bound on number of iterations in the analysis of \cite{lacoste2015linear_fw} does not depend explicitly on the dimension $n$, but on the squared inverse \textit{pyramidal width} of $\mP$, which is difficult to evaluate. However, already for the simplest polytope, i.e., the unit simplex, this quantity is proportional to $n$.}
\end{center}
\end{table*}

\subsection{Organization of the paper}
The rest of this paper is organized as follows. In Section \ref{sec:prelim} we give preliminaries and notation, and present the exact setting considered in this paper. In Section \ref{sec:approach} we briefly present the conditional gradient method and its previous away-steps-based variants, and present our new method: a decomposition-invariant pairwise conditional gradient algorithm. In this section we also give our main theorem which details the novel convergence rate of our method. In Section \ref{sec:polyExample} we briefly describe several important polytopes that fall into our assumptions, and detail the application of our method for optimization over these polytopes. In Section \ref{sec:analysis} we give a complete analysis of our method and prove the main theorem. In Section \ref{sec:extension} we detail how to apply our approach to a broader class of polytopes, though we do not complement our algorithm with a convergence rate result in this case. We also show that our requirement that the objective function is strongly convex can be relaxed, and that our results in fact hold for a broader class of functions. In Section \ref{sec:lowerbound} we introduce a lower-bound for conditional gradient-based methods, that shows that for certain problems with a sparse optimal solution, our method is nearly optimal. Finally, in Section \ref{sec:experiments} we present empirical evidence which demonstrates the performance of our method.

\section{Preliminaries}\label{sec:prelim}

\begin{definition}
We say that a function $f(x):\reals^{n}\rightarrow\reals$ is $\alpha$-strongly convex w.r.t. a norm $\Vert\cdot\Vert$,
if for all $x,y\in\reals^{n}$ it holds that
\begin{eqnarray*}
f(y) \geq f(x) + \nabla{}f(x)\cdot(y-x) + \frac{\alpha}{2}\Vert{x-y}\Vert^2 .
\end{eqnarray*}
\end{definition}

\begin{definition}
We say that a function $f(x):\reals^{n}\rightarrow\reals$ is $\beta$-smooth w.r.t. a norm $\Vert\cdot\Vert$, if for all $x,y\in\reals^{n}$ it holds that
\begin{eqnarray*}
f(y) \leq f(x) + \nabla{}f(x)\cdot(y-x) + \frac{\beta}{2}\Vert{x-y}\Vert^2 .
\end{eqnarray*}
\end{definition}

The first-order optimality condition implies that for a $\alpha$-strongly convex $f$, if $x^*$ is the unique minimizer of $f$ over a convex and compact set $\mK\subset\reals^{n}$, then for all $x\in\mK$ it holds that
\begin{eqnarray}\label{eq:strongconvexdist}
f(x) - f(x^*) \geq \frac{\alpha}{2}\Vert{x-x^*}\Vert^2 .
\end{eqnarray}

Throughout this work we let $\Vert\cdot\Vert$ denote the standard euclidean norm.
Given a point $x\in\reals^n$, we let $\card(x)$ denote the number of non-zero entries in $x$.

\subsection{Setting}\label{sec:setting}
In this work we consider the following optimization problem:
\begin{eqnarray*}
\min_{x\in\mP}f(x).
\end{eqnarray*}
We make the following assumptions on $f$ and $\mP$:
\begin{itemize}
\item
$f(x)$ is $\alpha$-strongly convex and $\beta$-smooth with respect to the $\ell_2$ norm.

\item
$\mP$ is a polytope which satisfies the following two properties:
\begin{enumerate}
\item
$\mP$ can be described algebraically as $\mP = \{x\in\reals^n \, | \, x\geq 0, \, Ax = b\}$ . 
\item
All vertices of $\mP$ lie on the hypercube $\{0,1\}^n$.
\end{enumerate}
\end{itemize}

We let $x^*$ denote the (unique) minimizer of $f$ over $\mP$, and we let $D$ denote the Euclidean diameter of $\mP$, namely, $D=\max_{x,y\in\mP} \|x-y\|$. We let $\mV$ denote the set of vertices of $\mP$, where according to our assumptions, it holds that $\mV\subset\{0,1\}^n$.

While the polytopes that satisfy the above assumptions are not completely general, these assumptions already capture several important concepts such as paths in graphs, perfect-matchings, Markov random fields, and more.
Indeed, a surprisingly large number of applications from machine learning, signal processing and other domains formulate optimization problems in this category (e.g., \cite{Jaggi13b, joulin2014fw, lacoste2015linear_fw}).
We give detailed examples of such polytopes in Section \ref{sec:polyExample}.
Importantly,
the above assumptions allow us to get rid of the dependency of the convergence rate on certain geometric parameters (such as $\psi,\xi$ in \cite{GH13} or the \textit{pyramidal width} in \cite{Jaggi13c,lacoste2015linear_fw}), which can be polynomial in the dimension, and hence result in an impractical convergence rate. 
Finally, for many of these polytopes, 
the vertices are sparse, i.e., for any vertex $v\in\mV$, $\card(v) << n$. In this case, when the optimum $x^*$ can be decomposed as a convex combination of only a few vertices (and thus, sparse by itself), we get a sharper convergence rate that depends on the sparsity of $x^*$ and not explicitly on the dimension, as in previous works. 

We believe that our theoretical guarantees could be well extended to more general polytopes and we leave this extension for future work.

\section{Our Approach}\label{sec:approach}
In order to better communicate our ideas, we begin by first briefly introducing the standard conditional gradient method and its accelerated away-steps-based variants. We discuss both the blessings and shortcomings of these away-steps-based variants in Subsection \ref{sec:oldCG}. Then, in Subsection \ref{sec:newCG}, we present our new method, a decomposition-invariant away-steps-based conditional gradient algorithm, and discuss how it addresses the major shortcomings of previous away-steps-based variants.

\subsection{The conditional gradient method and acceleration via away-steps}\label{sec:oldCG}

The standard conditional gradient algorithm is given below (Algorithm \ref{alg:cg}). It is well known that when setting the step-size $\eta_t$ in an appropriate way, the worst case convergence rate of the method is $O(\beta{}D^2/t)$ \cite{Jaggi13b}. This convergence rate is tight for the method in general, see for instance  \cite{Lan13}.

\begin{algorithm}
\caption{Conditional Gradient}
\label{alg:cg}
\begin{algorithmic}[1]
\STATE Let $x_1$ be some vertex in $\mV$
\FOR{$t = 1...$}
\STATE $v_t \gets \arg\min_{v\in\mV}v\cdot\nabla{}f(x_t)$
\STATE choose a step-size $\eta_t\in(0,1]$
\STATE $x_{t+1} \gets (1-\eta_t)x_t + \eta_tv_t$
\ENDFOR
\end{algorithmic}
\end{algorithm}

Consider the iterate of Algorithm \ref{alg:cg} on iteration $t$, and let $x_t = \sum_{i=1}^k\lambda_iv_i$ be its convex decomposition into vertices of the polytope $\mP$. Note that Algorithm \ref{alg:cg}, implicitly discounts each coefficient $\lambda_i$ by a factor $(1-\eta_t)$, in favor of the new added vertex $v_t$.
A different approach, is not to decrease all vertices in the decomposition of $x_t$ uniformly, but to more-aggressively decrease vertices that are worse than others, with respect to some computable measure, such as their product with the gradient direction. This key principle proves to be crucial to breaking the $1/t$ rate of the standard method, and to achieve a linear convergence rate under certain strong-convexity assumptions, as described in the recent works \cite{GH13, lacoste2015linear_fw, Beck15}. For instance, in \cite{GH13} it was shown, via the introduction of the concept of a \textit{Local Linear Optimization Oracle}, that using such a non-uniform reweighing rule, in fact approximates a certain \textit{proximal} problem, that together with the shrinking effect of strong convexity, as captured by Eq. \eqref{eq:strongconvexdist}, yields a linear convergence rate. We refer to these methods as away-step-based CG methods.
As a concrete example, which will also serve as a basis for our new method, we bring the \textit{pairwise} variant recently studied in \cite{lacoste2015linear_fw}, which applies this principle in Algorithm \ref{alg:cga}, given below \footnote{While the convergence rate of this pairwise variant, established in \cite{lacoste2015linear_fw}, despite being linear, is significantly worse than other away-step-based variants, here we show on the contrary, that a proper analysis yields state-of-the-art performance guarantees.}. Note that Algorithm \ref{alg:cga} decreases the weight of exactly one vertex in the decomposition: that with the largest product with the gradient.

It is important to note that since previous away-step-based CG, unlike the original CG method, do not decrease the coefficients in the convex decomposition of the current iterate uniformly, they all require to explicitly store and maintain a convex decomposition of the current iterate. This issue raises two main disadvantages:

\paragraph{Superlinear memory and running-time overheads} Storing a decomposition of the current iterate as a convex combination of vertices of the polytope generally requires $O(n^2)$ memory. While the away-step-based variants increase the size of the decomposition by at most a single vertex per iteration, they also typically exhibit linear convergence after performing at least $O(n)$ steps \cite{GH13, lacoste2015linear_fw, Beck15}, and thus, this $O(n^2)$ estimate still holds. Moreover, since these methods require i) to find the worse vertex in the decomposition, in terms of dot-product with current gradient direction, and ii) to update this decomposition on each iteration (even when using sophisticated update techniques such as in \cite{Beck15}), the per-iteration over-head in terms of computation time of these methods is also at least $O(n^2)$.

\paragraph{Decomposition-specific performance}
While the choice of new vertex to be added in Algorithms \ref{alg:cg} is independent of a specific representation of the current iterate $x_t$ as a convex combination of vertices of the polytope, the choice of away-step in Algorithm \ref{alg:cga} does depend on the specific decomposition that is maintained by the algorithm. Since the feasible point $x_t$ may admit several different convex decompositions, committing to one such decomposition, might result in sub-optimal away-steps. Ideally, the away-steps, much like the standard CG methods, will be independent of any specific decomposition.
As observable in Table \ref{table:compare}, for certain problems in which the optimal solution is sparse, all analyses of previous away-steps-based variants are significantly suboptimal, since they all depend explicitly on the dimension, which seems to be an unavoidable side-effect of being decomposition-dependent. On the other hand, the fact that our new approach is decomposition-invariant allows us to obtain sharper convergence rates for such instances.

\begin{algorithm}
\caption{Pairwise Conditional Gradient}
\label{alg:cga}
\begin{algorithmic}[1]
\STATE Let $x_1$ be some vertex in $\mV$
\FOR{$t = 1...$}
\STATE let $\sum_{i=1}^{k_t}a_t^{(i)}v_t^{(i)}$ be an \textbf{explicitly maintained} convex decomposition of $x_t$
\STATE $v_t^+ \gets \arg\min_{v\in\mV}v\cdot\nabla{}f(x_t)$
\STATE $j_t \gets \arg\min_{j\in[k_t]}v_t^{(j)}\cdot(-\nabla{}f(x_t))$
\STATE choose a step-size $\eta_t\in(0,\, a_t^{(j_t)}]$
\STATE $x_{t+1} \gets x_t + \eta_t(v_t^+-v_t^{(j_t)})$
\STATE update the convex decomposition of $x_{t+1}$
\ENDFOR
\end{algorithmic}
\end{algorithm}

\subsection{A new decomposition-invariant pairwise conditional gradient method}\label{sec:newCG}

Our main observation is that in many cases of interest, given a feasible iterate $x_t$, one can in-fact compute an optimal away-step from $x_t$ without relying on any single specific decomposition. This observation allows us to overcome both of the main disadvantages of previous away-step-based CG variants. Our algorithm, which we refer to as a \textit{decomposition-invariant pairwise conditional gradient} (DICG), is given below in Algorithm \ref{alg:newCG}. 

\begin{algorithm}
\caption{Decomposition-invariant Pairwise Conditional Gradient}
\label{alg:newCG}
\begin{algorithmic}[1]
\STATE input: sequence of step-sizes $\{\eta_t\}_{t\geq 1}$
\STATE let $x_0$ be an arbitrary point in $\mP$
\STATE $x_1 \gets \arg\min_{v\in\mV}v\cdot\nabla{}f(x_0)$
\FOR{$t = 1...$}
\STATE $v^{+}_t \gets \arg\min_{v\in\mV}v\cdot\nabla{}f(x_t)$
\STATE define the vector $\tilde{\nabla}f(x_t)\in\reals^m$ as follows:
\begin{eqnarray*}
\tilde{\nabla}f(x_t)_i := \left\{ \begin{array}{ll}
         \nabla{}f(x_t)_i & \mbox{if $x_t > 0$}\\
        -\infty & \mbox{if $x_t = 0$}\end{array} \right.
\end{eqnarray*}
\STATE $v^{-}_t \gets \arg\min_{v\in\mV}v\cdot\left({-\tilde{\nabla}f(x_t)}\right)$
\STATE choose a new step-size $\tilde{\eta}_t$ using one of the following two options:
\begin{description}
\item[Option 1: predefined step-size] \hfill \\
let $\delta_t$ be the smallest natural such that $2^{-\delta_t} \leq \eta_t$, and set a new step-size $\tilde{\eta}_t \gets 2^{-\delta_t}$
\item[Option 2: line-search] \hfill \\
$\gamma_t \gets \max_{\gamma\in[0,1]}\{x_t+\gamma{}(v_t^+ - v_t^-)\geq 0\}, \quad \tilde{\eta}_t \gets \min_{\eta\in(0,\gamma_t]}f(x_t + \eta(v_t^+ - v_t^-))$
\end{description}
\STATE $x_{t+1} \gets x_t + \tilde{\eta}_t(v^{+}_t - v^{-}_t)$
\ENDFOR
\end{algorithmic}
\end{algorithm}

The following observation details the optimality of away-steps taken by Algorithm \ref{alg:newCG}.

\begin{observation}[optimal away-steps in Algorithm \ref{alg:newCG}]\label{obsrv:optVminus}
Consider an iteration $t$ of Algorithm \ref{alg:newCG} and suppose that the iterate $x_t$ is feasible. Let $x_t=\sum_{i=1}^k\lambda_iv_i$ for some integer $k$, be an irreducible way of writing $x_t$ as a convex sum of vertices of $\mP$, i.e., $\lambda_i >0$ for all $i\in[k]$. Then it holds that
\begin{eqnarray*}
\forall i\in[k]: \quad v_i\cdot \nabla{}f(x_t) \leq v_t^- \cdot \nabla{}f(x_t), \qquad \gamma_t \geq \min\{x_t(i) \, | \, i\in[n], \, x_t(i) > 0\}.
\end{eqnarray*}
\end{observation}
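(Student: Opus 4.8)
The plan is to reduce everything to one structural fact about the iterates of Algorithm \ref{alg:newCG}: since the vertices of $\mP$ lie in $\{0,1\}^n$ and are therefore nonnegative, any vertex occurring in \emph{any} convex decomposition of a feasible point $x_t$ has support contained in $\supp(x_t)$. Concretely, writing $x_t = \sum_{i=1}^k \lambda_i v_i$ with $\lambda_i > 0$ and $v_i \in \mV$, for each coordinate $j$ with $x_t(j)=0$ the equality $0 = \sum_{i=1}^k \lambda_i v_i(j)$ is a sum of nonnegative terms, hence $v_i(j)=0$ for every $i\in[k]$. The immediate consequence is that $v_i\cdot\tilde{\nabla}f(x_t) = v_i\cdot\nabla f(x_t)$ for all $i\in[k]$, because $\tilde{\nabla}f(x_t)$ and $\nabla f(x_t)$ agree on $\supp(x_t)$ and each $v_i$ vanishes off $\supp(x_t)$.

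Next I would observe that the away vertex $v_t^-$, which after removing the sign is simply $\arg\max_{v\in\mV} v\cdot\tilde{\nabla}f(x_t)$, also satisfies $\supp(v_t^-)\subseteq\supp(x_t)$. This is because the maximum is finite: at least one vertex, e.g.\ $v_1$ from the decomposition above, attains a finite value of $v\cdot\tilde{\nabla}f(x_t)$, whereas any vertex with a coordinate in $[n]\setminus\supp(x_t)$ contributes a $-\infty$ term and cannot be the maximizer. Hence $v_t^-\cdot\tilde{\nabla}f(x_t) = v_t^-\cdot\nabla f(x_t)$, and combining with the previous paragraph and the optimality of $v_t^-$ we get, for every $i\in[k]$, $v_i\cdot\nabla f(x_t) = v_i\cdot\tilde{\nabla}f(x_t) \le v_t^-\cdot\tilde{\nabla}f(x_t) = v_t^-\cdot\nabla f(x_t)$, which is the first assertion.

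For the second assertion, let $\mu := \min\{x_t(i) \mid i\in[n],\, x_t(i)>0\}$; since $x_t\in\mP$ is a convex combination of $\{0,1\}$-vertices, every coordinate lies in $[0,1]$, so $\mu\in(0,1]$ and $\mu$ is an admissible value of $\gamma$. It then suffices to verify $x_t+\mu(v_t^+-v_t^-)\ge 0$ entrywise, since that forces $\gamma_t\ge\mu$. Fix a coordinate $j$: if $v_t^+(j)\ge v_t^-(j)$ then the $j$-th entry is at least $x_t(j)\ge 0$; otherwise $v_t^+(j)=0$ and $v_t^-(j)=1$, and then $\supp(v_t^-)\subseteq\supp(x_t)$ gives $x_t(j)>0$, hence $x_t(j)\ge\mu$ and the $j$-th entry is $x_t(j)-\mu\ge 0$. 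This proves $\gamma_t\ge\mu$.

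There is no deep obstacle; the argument is elementary once the support-containment property is extracted, and both parts of the statement hinge on exactly the same point — the finiteness of $\max_{v\in\mV}v\cdot\tilde{\nabla}f(x_t)$, which is guaranteed by the feasibility of $x_t$ and ensures $v_t^-$ behaves like a vertex ``inside'' the current face. The one thing to be careful about in the write-up is this finiteness step (and the harmless degenerate case $x_t=0$, which makes the minimum in the second claim vacuous but never occurs for the algorithm's iterates).
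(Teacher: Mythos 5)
Your proof is correct and follows essentially the same route as the paper: both hinge on the support-containment fact that nonnegativity of the $\{0,1\}$-vertices forces $\supp(v_i)\subseteq\supp(x_t)$ for every vertex in an irreducible decomposition, after which both claims follow from the definition of $v_t^-$. The paper simply states this containment and says the observation ``follows directly,'' whereas you spell out the finiteness of $\max_{v\in\mV}v\cdot\tilde{\nabla}f(x_t)$, the resulting containment $\supp(v_t^-)\subseteq\supp(x_t)$, and the coordinatewise check for $\gamma_t$ --- all correct and a faithful expansion of the paper's terse argument.
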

\begin{proof}
Let $x_t=\sum_{i=1}^k\lambda_iv_i$ be a convex decomposition of $x_t$ into vertices of $\mP$, for some integer $k$, where each $\lambda_i$ is positive. Note that it must hold that for any $j\in[n]$ and any $i\in[k]$, $x_t(j) = 0 \Rightarrow v_i(j) = 0$, since by our assumption on $\mP$, $\mV\subset\reals^n_+$. 
The observation then follows directly from the definition of $v_t^-$.
\end{proof}

The following theorem which details the convergence rate of Algorithm \ref{alg:newCG} is the main theorem of this paper.

\begin{theorem}\label{thm:main}
Let $M_1 = \sqrt{\frac{\alpha}{8\card(x^*)}}$ and $M_2 = \frac{\beta{}D^2}{2}$. Consider running Algorithm \ref{alg:newCG} with Option 1 for the step-size, and 
suppose that 
\begin{eqnarray*}
\forall t\geq 1: \qquad \eta_t =  \frac{M_1}{2\sqrt{M_2}}\left({1-\frac{M_1^2}{4M_2}}\right)^{\frac{t-1}{2}}. 
\end{eqnarray*}
Then the iterates of Algorithm \ref{alg:newCG} are always feasible and satisfy: 
\begin{eqnarray*}
\forall t\geq 1: \qquad f(x_t) - f(x^*) \leq \frac{\beta{}D^2}{2}\exp\left({-\frac{\alpha}{8\beta{}D^2\card(x^*)}t}\right).
\end{eqnarray*}
\end{theorem}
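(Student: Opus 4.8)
The plan is to combine four ingredients: feasibility of all iterates, a one–step decrease from smoothness, a geometric ``pairwise gap'' lemma that exploits both the $\{0,1\}$–structure of $\mV$ and the sparsity of $x^*$, and a telescoping argument tailored to the prescribed step sizes. For feasibility, note that since $\eta_t$ is strictly decreasing, the integers $\delta_t$ of Option~1 are non–decreasing, so $\tilde\eta_t=2^{-\delta_t}$ satisfies $\eta_t/2<\tilde\eta_t\le\eta_t$ and $\tilde\eta_t$ divides $\tilde\eta_{t-1}$. By induction on $t$ every coordinate of $x_t$ is an integer multiple of $\tilde\eta_{t-1}$ (with $\tilde\eta_0:=1$): this holds for $x_1\in\{0,1\}^n$ and is preserved because $v_t^+,v_t^-\in\{0,1\}^n$. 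Hence every positive coordinate of $x_t$ is $\ge\tilde\eta_{t-1}\ge\tilde\eta_t$, and since the definition of $\tilde{\nabla}f(x_t)$ forces $\supp(v_t^-)\subseteq\supp(x_t)$, Observation~\ref{obsrv:optVminus} shows that the largest step from $x_t$ along $v_t^+-v_t^-$ preserving nonnegativity is at least $\min\{x_t(i):x_t(i)>0\}\ge\tilde\eta_t$; thus $x_{t+1}=x_t+\tilde\eta_t(v_t^+-v_t^-)\ge 0$, and $Ax_{t+1}=b$ because $v_t^+,v_t^-\in\mP$, so $x_{t+1}\in\mP$.

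\textbf{One–step estimate and the geometric lemma (the crux).} By $\beta$–smoothness and $\|v_t^+-v_t^-\|\le D$ we get $f(x_{t+1})\le f(x_t)-\tilde\eta_t g_t+M_2\tilde\eta_t^2$, where $g_t:=\nabla f(x_t)\cdot(v_t^--v_t^+)\ge 0$ (nonnegativity holds because $\nabla f(x_t)\cdot v_t^-\ge\nabla f(x_t)\cdot v_i\ge\nabla f(x_t)\cdot v_t^+$ for any vertex $v_i$ of an irreducible decomposition of $x_t$, by Observation~\ref{obsrv:optVminus} and optimality of $v_t^+$). The heart of the proof is the bound
\[
g_t\ \ge\ \frac{\nabla f(x_t)\cdot(x_t-x^*)}{\sqrt{\card(x^*)}\;\|x_t-x^*\|}.
\]
I will derive it by writing the displacement as a nonnegative combination of vertex swaps, $x_t-x^*=\sum_k\gamma_k(v^{(k)}-u^{(k)})$, where each $v^{(k)}$ is a vertex of an irreducible decomposition of $x_t$, each $u^{(k)}\in\mV$, $\gamma_k\ge 0$, and $\sum_k\gamma_k\le\|(x^*-x_t)^+\|_1$. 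Granting this, each swap satisfies $\nabla f(x_t)\cdot(v^{(k)}-u^{(k)})\le\nabla f(x_t)\cdot v_t^--\nabla f(x_t)\cdot v_t^+=g_t$ (Observation~\ref{obsrv:optVminus} for $v^{(k)}$, optimality of $v_t^+$ for $u^{(k)}$), so $\nabla f(x_t)\cdot(x_t-x^*)=\sum_k\gamma_k\,\nabla f(x_t)\cdot(v^{(k)}-u^{(k)})\le g_t\,\|(x^*-x_t)^+\|_1$; and since $x_t\ge 0$ the support of $(x^*-x_t)^+$ lies inside $\supp(x^*)$, so Cauchy--Schwarz gives $\|(x^*-x_t)^+\|_1\le\sqrt{\card(x^*)}\,\|(x^*-x_t)^+\|_2\le\sqrt{\card(x^*)}\,\|x_t-x^*\|$, which is the claim. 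I expect the construction of the swap decomposition with this $\ell_1$ control to be the main obstacle: this is precisely where the hypotheses $\mV\subset\{0,1\}^n$ and $\mP=\{x\ge 0:Ax=b\}$ are essential (faces of $\mP$ are obtained by zeroing coordinates), and it should be built by a peeling argument --- as long as the running residual is nonzero, pick a coordinate $j$ where the current point exceeds $x^*(j)$, move weight $\gamma_k$ off a vertex $v^{(k)}$ with $v^{(k)}(j)=1$ onto a vertex $u^{(k)}\in\mV$ with $u^{(k)}(j)=0$ (such a vertex exists because then $x^*(j)<1$), and charge $\gamma_k$ against the decrease of $\|(\cdot-x^*)^+\|_1$.

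\textbf{Assembling and solving the recursion.} Convexity gives $\nabla f(x_t)\cdot(x_t-x^*)\ge h_t:=f(x_t)-f(x^*)$, and $\alpha$–strong convexity via~\eqref{eq:strongconvexdist} gives $\|x_t-x^*\|\le\sqrt{2h_t/\alpha}$; plugging both into the geometric lemma yields $g_t\ge 2M_1\sqrt{h_t}$. Substituting into the one–step estimate and using $\eta_t/2<\tilde\eta_t\le\eta_t$,
\[
h_{t+1}\ \le\ h_t-M_1\eta_t\sqrt{h_t}+M_2\eta_t^2 .
\]
I then prove by induction that $h_t\le M_2\,\rho^{\,t-1}$ with $\rho:=1-\tfrac{M_1^2}{4M_2}$. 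The base case $h_1\le M_2=\tfrac{\beta D^2}{2}$ follows from $\beta$–smoothness together with the fact that $x_1$ is the Frank--Wolfe vertex at $x_0$, so $\nabla f(x_0)\cdot(x_1-x_0)\le\nabla f(x_0)\cdot(x^*-x_0)\le f(x^*)-f(x_0)$ and hence $f(x_1)\le f(x^*)+\tfrac{\beta D^2}{2}$. For the inductive step, substitute the hypothesis and the given $\eta_t=\tfrac{M_1}{2\sqrt{M_2}}\rho^{(t-1)/2}$: the right–hand side above is a convex function of $\sqrt{h_t}$ on $[0,\sqrt{M_2\rho^{t-1}}]$, hence maximized at an endpoint, and one checks that the endpoint $\sqrt{h_t}=\sqrt{M_2\rho^{t-1}}$ gives exactly $M_2\rho^{t}$ while the endpoint $0$ gives $M_2\eta_t^2\le M_2\rho^{t}$ (which reduces to $M_1^2\le 2M_2$, i.e.\ $\alpha\le 8\beta D^2\card(x^*)$, true since $\alpha\le\beta$ and $D\ge 1$). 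Therefore $h_t\le M_2\rho^{t-1}$ for all $t\ge 1$, and $1-x\le e^{-x}$ converts this into the stated exponential convergence rate.
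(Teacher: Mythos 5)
Your proof has the same overall architecture as the paper's: (i) feasibility by showing every coordinate of $x_t$ is an integer multiple of a power of two at least $\tilde\eta_t$ (this matches Observation~\ref{observ:feasbileStep} and Lemma~\ref{lem:feasible}); (ii) a one-step smoothness bound combined with a bound relating the pairwise gap to $h_t$ via $\card(x^*)$ (this matches Lemma~\ref{lem:errReduction}); and (iii) an induction $h_t\le M_2\rho^{t-1}$ with $\rho=1-M_1^2/(4M_2)$, which is exactly the paper's choice $c_0=M_2$, $c_1=M_1^2/(4M_2)$. Your ``convex-in-$\sqrt{h_t}$, endpoints'' argument for the inductive step is equivalent to the paper's linearization and yields the same recursion. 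All of this is sound.

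The one genuine gap is the swap-decomposition claim, which is exactly the content of the paper's Lemma~\ref{lem:l1dist}. You assert a decomposition $x_t-x^*=\sum_k\gamma_k(v^{(k)}-u^{(k)})$ with $v^{(k)}$ from an irreducible decomposition of $x_t$, $u^{(k)}\in\mV$, and $\sum_k\gamma_k\le\|(x^*-x_t)^+\|_1$, and you propose to build it by ``charging $\gamma_k$ against the decrease of $\|(\cdot-x^*)^+\|_1$.'' But that potential need not decrease by $\gamma_k$ at each step: moving weight $\gamma_k$ from $v^{(k)}$ onto $u^{(k)}$ decreases the chosen coordinate $j$, yet it \emph{increases} every coordinate $i$ with $u^{(k)}(i)=1$, $v^{(k)}(i)=0$, and if any such $i$ already has (or crosses above) $x^*(i)$, the corresponding $(\cdot-x^*)^+$ term grows. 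So the charging argument as stated does not close. The paper sidesteps this by invoking Lemma~5.3 of~\cite{GH13} as a black box --- that lemma provides a decomposition of $x_t$ relative to a decomposition of $x^*$ in which, for each peeled vertex $u_i$, there is a coordinate $j_i$ with $\tilde z(j_i)=0$ and $u_i(j_i)=1$; the paper then gets $\sum_i\tilde\Delta_i\le\sqrt{|C|}\,\|x_t-x^*\|$ by restricting the squared norm to the set $C=\{j_i\}$ and applying Cauchy--Schwarz. Your target inequality $\sum_k\gamma_k\le\|(x^*-x_t)^+\|_1$ is in fact a consequence of that same lemma (since $\tilde z(j)=0$ and $u_i(j)\in\{0,1\}$ imply $(x^*-x_t)(j)\ge 0$ for $j\in C$), so your route and the paper's give the same final $\sqrt{\card(x^*)}\,\|x_t-x^*\|$ bound; the difference is that the paper's route actually proves the decomposition exists, while yours leaves the combinatorial construction open. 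To complete your argument you would need either to import Lemma~5.3 of~\cite{GH13} (as the paper does) or to find a peeling rule that provably moves weight only onto vertices $u^{(k)}$ whose increased coordinates do not overshoot $x^*$ --- the naive ``pick any $u^{(k)}$ with $u^{(k)}(j)=0$'' does not suffice.

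One small additional note shared with the paper: the recursion gives $h_t\le\frac{\beta D^2}{2}(1-\tfrac{\alpha}{16\beta D^2\card(x^*)})^{t-1}$, which via $1-x\le e^{-x}$ yields an exponent $-\tfrac{\alpha(t-1)}{16\beta D^2\card(x^*)}$ rather than the $-\tfrac{\alpha t}{8\beta D^2\card(x^*)}$ stated in Theorem~\ref{thm:main}; this constant-factor discrepancy already appears between the paper's proof and its theorem statement, so it is not a defect of your argument specifically, but you should not claim that $1-x\le e^{-x}$ recovers the exact stated bound.
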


The following corollary of Theorem \ref{thm:main} shows that the so-called \textit{duality gap}, defined as $g_t := (x_t - v^+_t)\cdot\nabla f(x_t)$, which serve as a certificate of the sub-optimality of the iterates of Algorithm \ref{alg:newCG}, also converges with a linear rate.

\begin{corollary}\label{corr:main}
For any iteration $t$ of Algorithm \ref{alg:newCG}, define the dual gap $g_t := (x_t - v_t^+)\cdot\nabla{}f(x_t)$, and observe that, since $f(x)$ is convex, $h_t \leq g_t$. Then, for any $t$ which satisfies: $h_t \leq \frac{\beta{}D^2}{2}$, it holds that
\begin{eqnarray*}
g_t \leq \sqrt{2\beta{}D^2h_t} .
\end{eqnarray*} 
\end{corollary}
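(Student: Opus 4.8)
The plan is a standard smoothness-plus-feasibility argument, optimized over a one-dimensional step-size; it uses neither strong convexity nor the decomposition-invariant structure of Algorithm~\ref{alg:newCG}, and is really a generic property of any feasible iterate together with its Frank--Wolfe vertex. Throughout I write $h_t := f(x_t)-f(x^*)$, and recall that $g_t \ge h_t \ge 0$ by the convexity observation already noted in the statement.

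First I would note that, since the iterates of Algorithm~\ref{alg:newCG} are feasible (by Theorem~\ref{thm:main}) and $v_t^+\in\mV\subset\mP$, the point $y_\eta := (1-\eta)x_t + \eta v_t^+$ lies in $\mP$ for every $\eta\in[0,1]$. Applying $\beta$-smoothness to the pair $(x_t,y_\eta)$, and using $\nabla f(x_t)\cdot(v_t^+-x_t) = -g_t$ together with $\|v_t^+-x_t\|\le D$, gives
\[
f(y_\eta) \;\le\; f(x_t) - \eta g_t + \frac{\beta\eta^2 D^2}{2}.
\]
Since $y_\eta\in\mP$ we have $f(y_\eta)\ge f(x^*)$, so rearranging yields, for every $\eta\in(0,1]$,
\[
g_t \;\le\; \frac{h_t}{\eta} + \frac{\beta\eta D^2}{2}.
\]

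It then remains to optimize the right-hand side over $\eta$. The unconstrained minimizer is $\eta^\star = \sqrt{2h_t/(\beta D^2)}$, with optimal value $\sqrt{2\beta D^2 h_t}$, and the hypothesis $h_t\le\beta D^2/2$ is exactly what guarantees $\eta^\star\le 1$, so that $\eta^\star$ is an admissible choice in the displayed bound (the degenerate case $h_t=0$ is immediate, since then the bound forces $g_t\le h_t/\eta$ for all $\eta$, hence $g_t = 0$). Substituting $\eta=\eta^\star$ gives $g_t\le\sqrt{2\beta D^2 h_t}$, as claimed. The only step that requires any care is this feasibility check that $\eta^\star\le 1$, which is precisely where the assumption on $h_t$ enters; everything else is routine.
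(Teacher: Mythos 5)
Your proof is correct and follows essentially the same route as the paper's: apply $\beta$-smoothness along the Frank--Wolfe direction, lower-bound $f(y_\eta)$ by $f(x^*)$, rearrange to get $g_t \le h_t/\eta + \beta\eta D^2/2$, and optimize over $\eta\in(0,1]$ with the hypothesis $h_t\le\beta D^2/2$ ensuring the unconstrained minimizer is admissible. The only addition is your explicit handling of the degenerate case $h_t=0$, which the paper leaves implicit.
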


We now turn to make several remarks regarding Algorithm \ref{alg:newCG} and Theorem \ref{thm:main}:
\begin{itemize}
\item Note that on any iteration $t$ of the algorithm, aside from the computation of the gradient vector $\nabla{}f(x_t)$ and the two calls to the linear optimization oracle of $\mP$, all other computations, when using the first option for choosing the step-size, can be carried out in $O(n)$ time and space. This is much more efficient than previous linearly convergent CG variant, such as those in \cite{GH13, lacoste2015linear_fw, Beck15}, which typically require at least additional $O(n^2)$ time and space per iteration, since they require to maintain an explicit convex decomposition of the iterates.

\item
Note that despite the different parameters of the problem at hand (e.g., $\alpha, \beta, D, \card(x^*)$), running the algorithm with Option 1 for choosing the step-size, for which the guarantee of Theorem \ref{thm:main} holds, requires the knowledge of a \textit{single} parameter, i.e., $M_1/\sqrt{M_2}$. In particular, it is an easy consequence that running the algorithm with an estimate $M\in[0.5M_1/\sqrt{M_2},\, M_1/\sqrt{M_2}]$, will only affect the leading constant in the convergence rate listed in the theorem. Hence, $M_1/\sqrt{M_2}$ could be efficiently estimated via a logarithmic-scale search.

\item
Theorem \ref{thm:main} improves significantly over the convergence rate established for the pairwise conditional gradient variant in 
\cite{lacoste2015linear_fw}. In particular, the number of iterations to reach an $\epsilon$ error in the analysis of \cite{lacoste2015linear_fw} depends linearly on $\vert{\mV}\vert!$, where $\vert{\mV}\vert$ is the number of vertices of $\mP$.
\end{itemize}

\section{Examples of Polytopes}\label{sec:polyExample}
In this section we turn to survey several important examples of structured polytopes that fit the assumptions detailed in Subsection \ref{sec:setting} and detail the application of Algorithm \ref{alg:newCG} to optimization over these polytopes.

\paragraph*{Unit Simplex}
The simplex in $\reals^n$ is the set of all distributions over $n$ elements, i.e. the set:
\begin{eqnarray*}
\mS_n = \lbrace{x\in\mathbb{R}^n \, |\, x \geq 0 \, , \, \sum_{i=1}^nx_i =1}\rbrace .
\end{eqnarray*}
Alternatively, $\mS_n$ is the convex hull of all standard basis vectors in $\reals^n$.
 
It is easy to verify that $D = \sqrt{2}$.

Linear minimization over the simplex is trivial and can be carried out by a single pass over the non-zero elements in the linear objective. In particular, computing $v_t^-$ in Algorithm \ref{alg:newCG} simply amounts to finding the largest (signed) entry in $\nabla{}f(x_t)$ which corresponds to a non-zero entry in $x_t$, and thus is even more efficient than computing the standard CG direction $v_t^+$.


\paragraph*{Flow polytope}
Let $G$ be a \textit{directed acyclic graph} (DAG) with a set of vertices $V$ such that $\vert{V}\vert=n$, and a set of edges $E$ such that $\vert{E}\vert = m$, and let $s,t$ be two vertices in $V$ which we refer to as the \textit{source} and the \textit{target}, respectively. The $s-t$ flow polytope, denoted here by $\mF_{st}$, is the set of all unit $s-t$ flows in $G$, where for each point $x\in\mF_{st}$ and $i\in[m]$, the entry $x_i$ is the amount of flow through edge $i$ according to the flow $x$. $\mF_{st}$ is also known as the \textit{$s-t$ path polytope} since it is the convex hull of all identifying vectors of paths from $s$ to $t$ in the graph $G$. It is easy to verify that that $D < \sqrt{2n}$.

Since $\mF_{st}$ is the convex hull of paths, linear minimization is straightforward: given a linear objective $c\in\reals^m$, we need to find the identifying vector of the lightest $s-t$ path in $G$ with respect to the edge weights induced by $c$. Since the graph $G$ is a DAG, this could be carried out in $O(m)$ time \cite{schrijver}. In particular, computing the direction $v_t^-$ in Algorithm \ref{alg:newCG} over the flow polytope, amounts to finding the lightest $s-t$ path in $G$ with respect to the gradient vector $\nabla{}f(x_t)$, under the constraint that all edges on the path are assigned non-zero flow by $x_t$. Thus,  we can compute $v_t^-$ by running a shortest $s-t$ path algorithm after removing all edges with zero flow from the graph. Thus, as in the simplex case, computing $v_t^-$ is even more efficient than computing the standard direction $v_t^+$.

It is also important to note that when $G$ is not extremely sparse, i.e., when $m=\omega(n)$, it holds for every vertex $v$ of $\mF_{st}$ that $\card(v) << m$. Thus, if $x^*$ can be expressed as a combination of only a few paths, i.e., it corresponds to a sparse flow, it holds that $\card(x^*)$ is much smaller than the standard dimension of the problem $m$.

\paragraph*{Perfect Matchings polytope}
Let $G$ be a \textit{bipartite} graph with $n$ vertices on each side and $m$ crossing edges. The perfect matching polytope, denoted here by $\mM$, is the convex hull of all identifying vectors of perfect matchings in $G$. In case the two sides of $G$ are fully connected, this polytope is also known as the Birkhoff polytope - the set of all $n\times n$ \text{doubly stochastic matrices}, i.e. matrices with non-negative real entries whose entries along any row and any column add up to $1$. 
It easily follows that $D \leq \sqrt{2n}$.

In order to minimize a linear objective over $\mM$, we need to find a minimum-weight perfect matching in a bipartite graph, where the edge weights are induced by the linear objective. This could be carried out via combinatorial algorithms in $\min\{\tilde{O}(\sqrt{n}m),O(n^3)\}$ time \cite{schrijver}. As in the flow polyope, in this case also, computing $v_t^-$ is even more efficient than computing $v_t^+$, since it amounts to finding a minimum weight perfect matching after all edges that are zero-valued in $x_t$ are removed from the graph.

As in the flow polytope, in case $G$ is not trivially sparse, i.e., when $m=\omega(n)$, it holds that if $x^*$ could be expressed as a combination of only a few matchings in $G$, then $\card(x^*) << m$, where $m$ is the dimension of the problem.

\paragraph*{Marginal polytope}
In \emph{Graphical Models} several optimization problems are defined for variables representing marginal distributions over subsets of model variables.
There exists a set of linear constraints, known as the \emph{marginal polytope}, which guarantees that these variables are legal marginals of some global distribution \cite{jordan}.
For example, the learning problem in Max-Margin Markov Networks is defined as a quadratic program over the marginal polytope \cite{taskar03}.

For general graphical models the marginal polytope consists of an exponential number of constraints.
Fortunately, for some models, such as tree-structured graphs, the polytope can be characterized by a polynomial number of local consistency constraints, known as the \emph{local marginal polytope} \cite{jordan}.
Consider a set of discrete variables $(y_1,\ldots,y_n)$, and denote by $\mu_c(y_c)$ the marginal probability of an assignment to a subset of these variables $y_c$. Then the local marginal polytope is defined as:
%
%
%
\[
{\cal{M}}_L = 
\left\{
\mu\geq 0 : 
\begin{array}{ll}
\sum_{y_{c\setminus i}} \mu_c(y_c) = \mu_{i}(y_i) & \forall c,i\in c, y_i \\
\sum_{y_i} \mu_i(y_i) = 1 & \forall i
\end{array}
\right\}
\]
For tree-structured graphs ${\cal{M}}_L$ is known to have only integral vertices \cite{jordan}, so it has the desired form assumed in Section \ref{sec:setting}.

In this case $D= \sqrt{2} |{\cal{C}}|$, where ${\cal{C}}$ is the number of subsets $y_c$ (factors in the graphical model).

In many interesting cases linear optimization over the marginal polytope can be implemented efficiently via dynamic programming. For example, for chain-structured graphs the Viterbi algorithm is used.
Finally, we note that computing the direction $v_t^-$ in Algorithm \ref{alg:newCG} can often be cheaper than computing $v_t^+$, since the restriction to the support of $x_t$ can eliminate many of the possible configurations of marginals.

\section{Analysis}\label{sec:analysis}
In this section we turn to analyze the performance of Algorithm \ref{alg:newCG}, and prove Theorem \ref{thm:main}.

Throughout this section we let $h_t$ denote the approximation error of Algorithm \ref{alg:newCG} on iteration $t$, for any $t\geq 1$, i.e., $h_t = f(x_t) - f(x^*)$.

\subsection{Feasibility of the iterates generated by Algorithm \ref{alg:newCG}}

We start by proving that the iterates of Algorithm \ref{alg:newCG} are always feasible. While feasibility is straightforward when using the the line-search option to set the step-size (Option 2), it is less obvious when using the first option.

\begin{observation}\label{observ:feasbileStep}
Suppose that on some iteration $t$ of Algorithm \ref{alg:newCG}, the iterate $x_t$ is feasible, and that the step-size is chosen using Option 1. Then, if for all $i\in[n]$ for which $x_t(i) \neq 0$ it holds that $x_t(i) \geq \tilde{\eta}_t$, then the following iterate $x_{t+1}$ is also feasible.
\end{observation}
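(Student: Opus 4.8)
The plan is to show that the update $x_{t+1} \gets x_t + \tilde{\eta}_t(v_t^+ - v_t^-)$ keeps all entries nonnegative and preserves the equality constraint $Ax = b$; since $\mP = \{x \geq 0, Ax = b\}$, these two facts together give feasibility of $x_{t+1}$.

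First I would handle the affine constraint. Since $v_t^+, v_t^- \in \mV \subset \mP$, both satisfy $A v_t^+ = b$ and $A v_t^- = b$, so $A(v_t^+ - v_t^-) = 0$. Hence $A x_{t+1} = A x_t + \tilde{\eta}_t \cdot 0 = A x_t = b$, using feasibility of $x_t$. This step is immediate.

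The substantive step is nonnegativity of $x_{t+1}$, i.e. $x_t(i) + \tilde{\eta}_t(v_t^+(i) - v_t^-(i)) \geq 0$ for every coordinate $i\in[n]$. I would split into three cases according to the coordinates. If $v_t^+(i) \geq v_t^-(i)$, the increment is nonnegative and there is nothing to prove (using $x_t(i)\geq 0$ and $\tilde\eta_t > 0$). If $v_t^+(i) < v_t^-(i)$, then since both vertices lie on $\{0,1\}^n$ we must have $v_t^+(i) = 0$ and $v_t^-(i) = 1$, so the increment is exactly $-\tilde{\eta}_t$, and we need $x_t(i) \geq \tilde{\eta}_t$. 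Here the crucial observation is that $v_t^-(i) = 1 \neq 0$ forces $x_t(i) \neq 0$: this is precisely the role of the $-\infty$ entries in $\tilde{\nabla}f(x_t)$ — the linear minimization defining $v_t^-$ will never select a vertex with a $1$ in a coordinate where $x_t$ vanishes, because doing so would give objective value $-\infty \cdot (-1) = +\infty$ (equivalently, any vertex $v$ with $\supp(v)\subseteq\supp(x_t)$ attains a finite value of $v\cdot(-\tilde\nabla f(x_t))$ and hence beats it). So $x_t(i)\neq 0$, and then the hypothesis of the observation gives $x_t(i) \geq \tilde{\eta}_t$, as needed. The remaining case $v_t^+(i) = v_t^-(i)$ leaves the coordinate unchanged.

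The main obstacle — really the only point requiring care — is justifying that $v_t^-$ has support contained in $\supp(x_t)$, so that the hypothesis "$x_t(i)\neq 0 \Rightarrow x_t(i)\geq\tilde\eta_t$" actually covers every coordinate where a decrease occurs. I would make this rigorous by noting that $\mV\subset\reals^n_+$ is nonempty and every vertex appearing in a convex decomposition of the feasible point $x_t$ has support inside $\supp(x_t)$ (as already noted in the proof of Observation \ref{obsrv:optVminus}), so at least one feasible competitor with finite objective exists; therefore the $\argmin$ defining $v_t^-$ cannot be a vertex that puts mass on a coordinate outside $\supp(x_t)$. Given that, the case analysis above closes, and combining with $A x_{t+1} = b$ yields $x_{t+1}\in\mP$.
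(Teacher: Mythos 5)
Your proof is correct and takes essentially the same approach as the paper: both establish that the support of $v_t^-$ lies inside $\supp(x_t)$ (via the $-\infty$ entries in $\tilde{\nabla}f(x_t)$), then use the hypothesis $x_t(i)\geq\tilde\eta_t$ on $\supp(x_t)$ together with $v_t^+\geq 0$ and $v_t^\pm\in\{0,1\}^n$ to get nonnegativity, and preserve $Ax=b$ trivially. The paper argues at the vector level ($w_t := x_t - \tilde\eta_t v_t^- \geq 0$, then add $\tilde\eta_t v_t^+$) rather than coordinate-by-coordinate, but the substance is identical; your explicit note that a finite-valued competitor for the $\arg\min$ defining $v_t^-$ exists is a small extra bit of rigor the paper leaves implicit.
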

\begin{proof}
From the optimality of $v_t^-$ it follows that for any $i\in[n]$, if $x_t(i) = 0$, then $v_t^-(i) = 0$ (note in particular that any vertex with positive weight in some convex decomposition of $x_t$ must satisfy this condition). Thus, from our assumption on the size of positive entries in $x_t$, and since $v_t^-\in\{0,1\}^n$, it follows that the vector $w_t := x_t - \tilde{\eta}_tv_t^-$, satisfies: $w_t \geq 0$. Since $v_t^+$ is feasible it also follows that $x_{t+1} = w_t + \tilde{\eta}_t \geq 0$. Finally, since $x_t, v_t^-, v_t^+$ are all feasible, it also holds that $Ax_{t+1} = b$. Thus, $x_{t+1}$ is feasible.
\end{proof}

\begin{lemma}[feasibility of iterates under Option 1]\label{lem:feasible}
Suppose that the sequence of step-sizes $\{\eta_t\}_{t\geq 1}$ is monotonically non-increasing, and contained in the interval $[0,1]$. Then,
the iterates generated by Algorithm \ref{alg:newCG} using Option 1 for setting the step-size, are always feasible.
\end{lemma}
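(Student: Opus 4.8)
The plan is to argue by induction on $t$ that $x_t$ is feasible, using Observation \ref{observ:feasbileStep} for the inductive step. The base case $t=1$ is immediate: $x_1$ is a vertex of $\mP$, hence feasible. For the inductive step, assume $x_t$ is feasible; by Observation \ref{observ:feasbileStep} it suffices to show that every positive coordinate of $x_t$ is at least $\tilde\eta_t$. The key structural fact I would establish is that, under Option 1, every nonzero coordinate of every iterate is a dyadic rational whose denominator is controlled by the step-sizes used so far; more precisely, I would prove by induction the invariant that $x_t$ is an integer combination of the form $x_t = \sum (\text{integer}) \cdot 2^{-\delta}$ where $2^{-\delta} = \tilde\eta_{t-1}$ is the smallest dyadic step used up to time $t-1$ (equivalently, $2^{\delta_{t-1}} x_t \in \mathbb{Z}^n$). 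Since the $\eta_t$ are monotonically non-increasing, the rounded dyadic step-sizes $\tilde\eta_t = 2^{-\delta_t}$ satisfy $\delta_1 \le \delta_2 \le \cdots$, so $2^{\delta_{t-1}}$ divides $2^{\delta_t}$ and the invariant is preserved when we form $x_{t+1} = x_t + \tilde\eta_t(v_t^+ - v_t^-)$ with $v_t^\pm \in \{0,1\}^n$.

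Granting that invariant, the positive coordinates of $x_t$ are positive integer multiples of $\tilde\eta_{t-1} = 2^{-\delta_{t-1}}$, hence each is $\ge 2^{-\delta_{t-1}}$. Because $\delta_{t-1} \le \delta_t$ (monotonicity of the step-sizes), we get $2^{-\delta_{t-1}} \ge 2^{-\delta_t} = \tilde\eta_t$, so every positive entry of $x_t$ is at least $\tilde\eta_t$. This is exactly the hypothesis required by Observation \ref{observ:feasbileStep}, which then yields feasibility of $x_{t+1}$, closing the induction. I should also note at the outset that $\tilde\eta_t \in (0,1]$: since $\eta_t \in (0,1]$, the smallest natural $\delta_t$ with $2^{-\delta_t} \le \eta_t$ is well-defined and $\tilde\eta_t = 2^{-\delta_t} \le \eta_t \le 1$, so the step is a legitimate convex-type update; and $Ax_{t+1}=b$ holds automatically since $A v_t^+ = A v_t^- = b$ forces $A(v_t^+ - v_t^-) = 0$.

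The main obstacle is getting the dyadic invariant stated and threaded correctly: one has to be careful that it is $\delta_{t-1}$ (the step used to produce $x_t$), not $\delta_t$, that bounds the denominators of $x_t$, and that monotonicity of $\{\eta_t\}$ is what guarantees $\delta_{t-1} \le \delta_t$ and hence that the earlier, coarser grid embeds in the current one. A secondary subtlety is the very first iterate: $x_1$ is a $0/1$ vector, so its positive entries equal $1 \ge \tilde\eta_1$, and one should check the invariant holds at $t=1$ with the convention that $2^{\delta_0}x_1 \in \mathbb{Z}^n$ for any $\delta_0$ (trivially, since $x_1 \in \{0,1\}^n$). Once these bookkeeping points are pinned down, the rest is a direct combination of the invariant with Observation \ref{observ:feasbileStep}.
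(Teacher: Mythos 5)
Your proof is correct and takes essentially the same approach as the paper: an induction establishing a dyadic-grid invariant on the iterates, combined with Observation~\ref{observ:feasbileStep}. The only cosmetic difference is an index shift in the invariant---you state it as $2^{\delta_{t-1}}x_t \in \mathbb{Z}^n$ (and invoke monotonicity $\delta_{t-1}\le\delta_t$ to conclude positive entries are $\ge\tilde\eta_t$), whereas the paper states it directly as $x_t = 2^{-\delta_t}s_t$ with $s_t \in \mathbb{N}^n$ and uses monotonicity only to preserve the invariant under the update; the two formulations are interchangeable.
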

\begin{proof}
We are going to prove by induction that on each iteration $t$ there exists a non-negative integer-valued vector $s_t\in\mathbb{N}^n$, such that
for any $i\in[n]$, it holds that $x_t(i) = 2^{-\delta_t} s_t(i)$. The lemma then follows by applying Observation \ref{observ:feasbileStep}, and since by definition, $\tilde{\eta}_t = 2^{-\delta_t}$.

The base case $t=1$ holds since $x_1$ is a vertex of $\mP$ and thus for any $i\in[n]$ we have that $x_1(i)\in\{0,1\}$ (recall that $\mV\subset\{0,1\}^n$). On the other hand, since $\eta_1 \leq 1$, it follows that $\delta_1 \geq 0$. Thus, there indeed exists a non-negative integer-valued vector $s_1$, such that $x_1 = 2^{-\delta_1} s_1$.

Suppose now that the induction holds for some $t\geq 1$. Since by definition of $v_t^-$, subtracting $\tilde{\eta}_tv_t^-$ from $x_t$ can only decrease positive entries in $x_t$ (see proof of Observation \ref{observ:feasbileStep}), and both $v_t^-, v_t^+$ are vertices of $\mP$ (and thus in $\{0,1\}^n$), and $\tilde{\eta}_t = 2^{-\delta_t}$, it follows that each entry $i$ in $x_{t+1}$ is given by: 
{\small \begin{eqnarray*}
x_{t+1}(i) = 2^{-\delta_t}\left\{ \begin{array}{ll}
         s_t(i) & \mbox{if $s_t(i) \geq 1$ \& $v_t^-(i) = v_t^+(i) = 1$ or $v_t^-(i) = v_t^+(i) = 0$}\\
         s_t(i)-1 & \mbox{if $s_t(i) \geq 1$ \& $v_t^-(i) = 1$ \& $v_t^+(i) = 0$}\\
         s_t(i)+1 & \mbox{if $v_t^-(i) = 0$ \& $v_t^+(i) = 1$}\end{array} \right.
\end{eqnarray*}}
Thus, $x_{t+1}$ can also be written in the form $2^{-\delta_t}\tilde{s}_{t+1}$ for some $\tilde{s}_{t+1}\in\mathbb{N}^n$. By definition of $\delta_t$ and the monotonicity of $\{\eta_t\}_{t\geq 1}$, we have that  $\frac{2^{-\delta_t}}{2^{-\delta_{t+1}}}$ is a positive integer. Thus, setting $s_{t+1} = \frac{2^{-\delta_t}}{2^{-\delta_{t+1}}}\tilde{s}_{t+1}$, the induction holds also for $t+1$.
\end{proof}
\subsection{Bounding the per-iteration error-reduction of Algorithm \ref{alg:newCG}}
The following technical lemma is the key to deriving the linear convergence rate of our method, and in particular, to deriving the improved dependence on the sparsity of $x^*$, instead of the dimension. At a high-level, the lemma translates the $\ell_2$ distance between two feasible points into a $\ell_1$ distance in a simplex defined over the set of vertices of the polytope, which as we will show, is a natural way to measure distances for conditional gradient-based methods.
\begin{lemma}\label{lem:l1dist}
Let $x,y\in\mP$. There exists a way to write $x$ as a convex combination of vertices of $\mP$, $x=\sum_{i=1}^k\lambda_iv_i$ for some integer $k$, such that $y$ can be written as $y = \sum_{i=1}^k(\lambda_i-\Delta_i)v_i + (\sum_{i=1}^k\Delta_i)z$ with $\Delta_i\in[0,\lambda_i] \, \forall i\in[k]$,$z\in\mP$, and
$\sum_{i=1}^k\Delta_i \leq \sqrt{\card(y)}\Vert{x-y}\Vert $.
\end{lemma}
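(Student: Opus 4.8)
The plan is to build the decomposition of $x$ greedily, peeling off vertices of $\mP$ that overlap as much as possible with $y$, so that the "leftover" mass that must be rerouted through a new point $z$ is controlled by the coordinatewise difference between $x$ and $y$. Concretely, I would proceed iteratively: given a current residual $x' = x - \sum (\text{already extracted terms})$ which is still a nonnegative multiple of a point in $\mP$, I want to extract a vertex $v$ of $\mP$ together with a coefficient $\Delta$ such that $x' - \Delta v$ is again a nonnegative multiple of a point in $\mP$, and crucially such that $v$ agrees with $y$ on the coordinates where $y$ is large. The key observation is that I am free to choose which decomposition of $x$ to use, so at each step I can pick $v$ to be a vertex of $\mP$ that is "compatible" with $y$, i.e. $v(i) \le$ something proportional to $y(i)$ on the support of $y$. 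The total mass sent to $z$ is then $\sum_i \Delta_i$, and I want to charge it to $\|x-y\|$.

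The cleaner route is probably to argue about the single point $z$ directly rather than tracking each step. Write $y = \sum_{i=1}^k (\lambda_i - \Delta_i) v_i + \Lambda z$ where $\Lambda = \sum_i \Delta_i$. On any coordinate $j$ with $y(j) = 0$: since all vertices lie in $\{0,1\}^n$ and all relevant points are nonnegative (assumption on $\mP$), every $v_i$ with $\lambda_i - \Delta_i > 0$ must have $v_i(j) = 0$ — but I need to ensure the decomposition is chosen so that this forces the extracted-away mass to sit exactly where $x$ and $y$ disagree. The quantitative heart is: $\Lambda$ equals the total mass "removed" from the decomposition, and by looking coordinatewise, the mass removed on coordinate $j$ is at least $\max\{0, \text{(mass of }x\text{ there)} - \text{(mass of }y\text{ there)}\}$ on those coordinates, but I want the reverse-type bound — that $\Lambda$ is at most a sum over coordinates where $y(j) > 0$ of quantities like $|x(j) - y(j)|$, or really, that I can arrange $\Lambda \le \sum_{j : y(j) > 0} |x(j)-y(j)|$. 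Then Cauchy–Schwarz over the $\card(y)$ nonzero coordinates of $y$ gives $\sum_{j: y(j)>0} |x(j)-y(j)| \le \sqrt{\card(y)} \, \|x-y\|_2$, which is exactly the claimed bound.

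So the concrete construction I would write down: set $\Lambda := \sum_{j=1}^n \max\{0,\, x(j) - y(j)\}$ restricted appropriately — actually I expect the right quantity is tied to coordinates where $y(j)>0$, using that on coordinates with $y(j)=0$ we automatically have $x(j)=0$? No — that is false in general ($x$ may be positive where $y$ is zero). This is the subtle point. When $y(j) = 0$ but $x(j) > 0$, the vertices in $x$'s decomposition that are nonzero on $j$ must be entirely moved away, contributing to $\Lambda$, and there is no $|x(j)-y(j)|$ term bounded by $y(j)$ to absorb it cheaply — but $x(j) - y(j) = x(j)$, so it is still $|x(j)-y(j)|$. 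The point is whether $\Lambda \le \sum_j \max\{0, x(j)-y(j)\} = \tfrac12\|x-y\|_1$ always, and whether that $\ell_1$ sum can be restricted to the support of $y$ (it cannot), or alternatively whether the factor should be $\sqrt{\card(x) + \card(y)}$ but a smarter argument gives $\sqrt{\card(y)}$. I would resolve this by being careful about the greedy choice of $z$: ideally one takes $z$ supported only where $y$ is supported, and shows any excess of $x$ over $y$ on $\supp(y)$ plus the rerouting is what matters. The main obstacle is exactly this: proving that the leftover mass $\Lambda$ can be made $\le \sum_{j \in \supp(y)} |x(j) - y(j)|$ and not merely $\le \tfrac12\|x-y\|_1$, which requires choosing the decomposition of $x$ adaptively to $y$ — greedily matching vertices of $x$'s support to the structure of $y$ — and then verifying that the recursion terminates with a valid $z \in \mP$. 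I would set this up as an explicit finite greedy procedure (each step zeroes out at least one coordinate of the residual or exhausts one $\lambda_i$), prove by induction that the residual stays a scaled point of $\mP$, and track $\Lambda$ coordinatewise throughout, finishing with Cauchy–Schwarz.
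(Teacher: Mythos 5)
Your high-level strategy is sound — you correctly identify that the goal is to charge the rerouted mass $\Lambda = \sum_i\Delta_i$ to a sum of coordinatewise discrepancies supported only on $\supp(y)$, and then apply Cauchy--Schwarz over $\card(y)$ coordinates to get the stated bound. However, there is a genuine gap: you explicitly acknowledge the core difficulty (when $y(j)=0$ but $x(j)>0$, the mass that must be moved off coordinate $j$ has no matching $|x(j)-y(j)|$ term inside $\supp(y)$ to absorb it) and then defer its resolution to an unspecified ``finite greedy procedure'' without proving it terminates with the claimed bound. Nothing you write actually establishes the existence of a decomposition for which $\Lambda$ can be tied to $\supp(y)$; this existence is the entire technical content of the lemma, and for general polytopes in the paper's class it does not follow from a naive greedy peeling or from the simplex-style $\ell_1$ identity (note that $\mathbf{1}^\top x$ is not even constant over, e.g., the flow polytope, so the trick $\tfrac12\|x-y\|_1 = \sum_{j:y(j)>x(j)}(y(j)-x(j))$ that you are implicitly reaching for is unavailable).

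The paper closes exactly this gap by running the argument in the reverse direction and invoking an existing structural result. It starts from an arbitrary convex decomposition of $y=\sum_i\gamma_i u_i$ (the \emph{sparse} point), and applies Lemma 5.3 of~\cite{GH13}, which produces $x=\sum_i(\gamma_i-\tilde\Delta_i)u_i + \Lambda\tilde z$ together with, for each $i$ with $\tilde\Delta_i>0$, a ``separating'' coordinate $j_i$ satisfying $\tilde z(j_i)=0$ and $u_i(j_i)=1$. Because $u_i$ participates in $y$'s decomposition with positive weight and $u_i(j_i)=1$, each $j_i$ automatically lies in $\supp(y)$, so the set $C=\{j_i\}$ has $|C|\leq\card(y)$ — this is precisely the support-control step you could not supply. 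Restricting $\|x-y\|^2 = \|\sum_i\tilde\Delta_i(u_i-\tilde z)\|^2$ to the coordinates in $C$ (on which $\tilde z$ vanishes) and applying Cauchy--Schwarz then gives $\Lambda\leq\sqrt{|C|}\,\|x-y\|$, after which the desired decomposition of $x$ (and re-expression of $y$) is read off directly. So the missing ingredient in your write-up is the separating-coordinate property from the GH13 lemma, plus the observation that decomposing from $y$'s side (not $x$'s) is what forces the relevant coordinates into $\supp(y)$; as written, your proposal identifies the target inequality but does not prove it.
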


\begin{proof}
Consider writing $y$ as some convex combination of vertices, $y=\sum_{i=1}^s\gamma_iu_i$ for some appropriate integer $s$. Applying Lemma 5.3. from \cite{GH13}, it follows that we can write $x$ as
\begin{eqnarray}\label{eq:lem:l1dist:1}
x = \sum_{i=1}^s(\gamma_i-\tilde{\Delta}_i)u_i + (\sum_{i=1}^s\tilde{\Delta}_i)\tilde{z},
\end{eqnarray}
where $\tilde{\Delta}_i\in[0,\gamma_i] \, \forall i\in[s]$, $\tilde{z}\in\mP$, and for every $i$ with $\tilde{\Delta}_i > 0$ there exists $j_i\in[n]$ such that $\tilde{z}(j_i) =0$ and $u_i(j_i) > 0$. Since each $u_i$ is a vertex of $\mP$ and thus a point of the $\{0,1\}$-hypercube, it further follows that $u_i(j_i) = 1$.
 Let $C = \{j_i \, | \, i\in[s]\}$. Observe that $\vert{C}\vert \leq \card(y)$. Now, we have that
\begin{eqnarray*}
\Vert{x-y}\Vert^2 &=& \Vert{\sum_{i=1}^s\tilde{\Delta}_i(u_i-\tilde{z})}\Vert^2 \geq \sum_{j\in{}C}\left({\sum_{i=1}^s\tilde{\Delta}_i(u_i(j)-\tilde{z}(j))}\right)^2
= \sum_{j\in{}C}\left({\sum_{i=1}^s\tilde{\Delta}_iu_i(j)}\right)^2  \\
&\geq & \frac{1}{\vert{C}\vert}\left({\sum_{j\in{}C}\sum_{i=1}^s\tilde{\Delta}_iu_i(j)}\right)^2 \geq \frac{1}{\vert{C}\vert}\left({\sum_{i=1}^s\tilde{\Delta}_i}\right)^2 .
\end{eqnarray*}
Rearranging we have that 
\begin{eqnarray}\label{eq:leml1dist:2}
\sum_{i=1}^s\tilde{\Delta}_i \leq \sqrt{\vert{C}\vert}\Vert{x-y}\Vert \leq \sqrt{\card(y)}\Vert{x-y}\Vert.
\end{eqnarray}

Note that using the convex decomposition of $x$ as in Eq. \eqref{eq:lem:l1dist:1}, and the bound in Eq. \eqref{eq:leml1dist:2} it follows that we can rewrite $y$ as a convex decomposition as suggested in the lemma.
\end{proof}

\begin{lemma}\label{lem:errReduction}
Consider the iterates of Algorithm \ref{alg:newCG}, when the step-sizes are chosen using Option 1.
Let $M_1 = \sqrt{\frac{\alpha}{8\card(x^*)}}$ and $M_2 = \frac{\beta{}D^2}{2}$. For any $t\geq 1$ it holds that
\begin{eqnarray*}
h_{t+1} \leq h_t - \eta_tM_1h_t^{1/2}  +\eta_t^2M_2.
\end{eqnarray*}
\end{lemma}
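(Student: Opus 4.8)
The plan is to combine the $\beta$-smoothness of $f$ along the update direction $v_t^+ - v_t^-$ with a carefully chosen comparison point that exploits both the optimality of the Frank--Wolfe vertex $v_t^+$ and the optimality of the away vertex $v_t^-$ established in Observation~\ref{obsrv:optVminus}. Concretely, I would start from the smoothness inequality applied to $x_{t+1} = x_t + \tilde\eta_t(v_t^+ - v_t^-)$:
\begin{eqnarray*}
h_{t+1} \le h_t + \tilde\eta_t (v_t^+ - v_t^-)\cdot\nabla f(x_t) + \frac{\beta\tilde\eta_t^2}{2}\Vert v_t^+ - v_t^-\Vert^2,
\end{eqnarray*}
and bound $\Vert v_t^+ - v_t^-\Vert^2 \le D^2$ so the last term is at most $\tilde\eta_t^2 M_2$ (here one must also check that $\tilde\eta_t \le \eta_t$ so that replacing $\tilde\eta_t^2$ by $\eta_t^2$ only in the quadratic term is... actually one wants the reverse; see the obstacle paragraph). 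The crux is therefore to show that the linear term satisfies $\tilde\eta_t (v_t^+ - v_t^-)\cdot\nabla f(x_t) \le -\tilde\eta_t M_1 h_t^{1/2}$, i.e. that $(v_t^- - v_t^+)\cdot\nabla f(x_t) \ge M_1 h_t^{1/2} = \sqrt{\alpha/(8\,\card(x^*))}\,\sqrt{h_t}$.

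To bound the inner product $(v_t^- - v_t^+)\cdot\nabla f(x_t)$ from below, I would invoke Lemma~\ref{lem:l1dist} with $x = x_t$ and $y = x^*$: this produces a decomposition $x_t = \sum_i \lambda_i v_i$ and weights $\Delta_i \in [0,\lambda_i]$, $z \in \mP$, with $x^* = \sum_i(\lambda_i - \Delta_i)v_i + (\sum_i\Delta_i)z$ and $\sum_i\Delta_i \le \sqrt{\card(x^*)}\,\Vert x_t - x^*\Vert$. Writing $x_t - x^* = \sum_i \Delta_i(v_i - z)$, the convexity/duality-gap bound gives
\begin{eqnarray*}
h_t \le (x_t - x^*)\cdot\nabla f(x_t) = \sum_i \Delta_i (v_i - z)\cdot\nabla f(x_t).
\end{eqnarray*}
Now for each $i$ in the decomposition, Observation~\ref{obsrv:optVminus} gives $v_i\cdot\nabla f(x_t) \le v_t^-\cdot\nabla f(x_t)$, and by optimality of $v_t^+$ we have $v_t^+\cdot\nabla f(x_t) \le z\cdot\nabla f(x_t)$ (since $z\in\mP$), hence $(v_i - z)\cdot\nabla f(x_t) \le (v_t^- - v_t^+)\cdot\nabla f(x_t)$ termwise. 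Summing with the nonnegative weights $\Delta_i$,
\begin{eqnarray*}
h_t \le \Big(\sum_i\Delta_i\Big)(v_t^- - v_t^+)\cdot\nabla f(x_t) \le \sqrt{\card(x^*)}\,\Vert x_t - x^*\Vert\cdot (v_t^- - v_t^+)\cdot\nabla f(x_t).
\end{eqnarray*}
Finally, the strong-convexity consequence \eqref{eq:strongconvexdist} gives $\Vert x_t - x^*\Vert \le \sqrt{2h_t/\alpha}$, so $(v_t^- - v_t^+)\cdot\nabla f(x_t) \ge \frac{h_t}{\sqrt{\card(x^*)}\sqrt{2h_t/\alpha}} = \sqrt{\frac{\alpha}{2\card(x^*)}}\sqrt{h_t}$. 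This is even a bit stronger than the needed $M_1\sqrt{h_t} = \sqrt{\alpha/(8\card(x^*))}\sqrt{h_t}$, which leaves slack to absorb the $\tilde\eta_t$-versus-$\eta_t$ discrepancy.

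The main obstacle I anticipate is reconciling the \emph{actual} step-size $\tilde\eta_t = 2^{-\delta_t}$ used by the algorithm with the \emph{nominal} step-size $\eta_t$ appearing in the statement of the lemma, since by the definition of $\delta_t$ we have $\tilde\eta_t \le \eta_t$ but also $\tilde\eta_t > \eta_t/2$. For the quadratic error term $\tfrac{\beta\tilde\eta_t^2}{2}D^2 \le \eta_t^2 M_2$ this goes the right way (smaller step, smaller overshoot), but for the linear decrease term $-\tilde\eta_t M_1' h_t^{1/2}$ a smaller step gives a \emph{weaker} decrease, so I need $\tilde\eta_t M_1' \ge \eta_t M_1$ where $M_1' = \sqrt{\alpha/(2\card(x^*))} = 2M_1$; since $\tilde\eta_t \ge \eta_t/2$ this yields $\tilde\eta_t M_1' \ge (\eta_t/2)(2M_1) = \eta_t M_1$, exactly as required. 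One also needs to confirm feasibility of $x_{t+1}$ so that all the above inequalities are legitimately applied at a feasible iterate --- but this is precisely Lemma~\ref{lem:feasible}, whose hypotheses (monotone non-increasing step-sizes in $[0,1]$) are guaranteed by the specific geometric sequence $\{\eta_t\}$ in Theorem~\ref{thm:main}, so it suffices to note this here. Assembling the linear term bound, the quadratic term bound, and the step-size reconciliation yields $h_{t+1} \le h_t - \eta_t M_1 h_t^{1/2} + \eta_t^2 M_2$.
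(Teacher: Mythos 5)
Your proof is correct and follows essentially the same route as the paper's: both invoke the $\beta$-smoothness inequality along $v_t^+-v_t^-$, use Lemma~\ref{lem:l1dist} together with Observation~\ref{obsrv:optVminus} and the convexity/strong-convexity bounds to lower-bound the pairwise gap $(v_t^--v_t^+)\cdot\nabla f(x_t)$ by $\sqrt{\alpha/(2\card(x^*))}\,h_t^{1/2}=2M_1h_t^{1/2}$, and then absorb the $\tilde\eta_t$-versus-$\eta_t$ discrepancy using $\eta_t/2\le\tilde\eta_t\le\eta_t$. The only cosmetic difference is that the paper phrases the key inequality via the auxiliary point $y_t=x_t+\Delta_t(v_t^+-v_t^-)$ and shows $y_t\cdot\nabla f(x_t)\le x^*\cdot\nabla f(x_t)$, while you bound the pairwise gap directly; the content is identical.
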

\begin{proof}
Define $\Delta_t = \sqrt{\frac{2\card(x^*)h_t}{\alpha}}$, and note that from Eq. \eqref{eq:strongconvexdist} we have that $\Delta_t \geq \sqrt{\card(x^*)}\Vert{x_t-x^*}\Vert$. 

As a first step, we are going to show that the point $y_t := x_t + \Delta_t(v_t^{+} - v_t^{-})$ satisfies: $y_t\cdot\nabla{}f(x_t) \leq x^*\cdot\nabla{}f(x_t)$.

From Lemma \ref{lem:l1dist} it follows that we can write $x$ as a convex combination $x_t = \sum_{i=1}^k\lambda_iv_i$ and write $x^*$ as $x^* = \sum_{i=1}^k(\lambda_i-\Delta_i)v_i + \sum_{i=1}^k\Delta_iz$, where $\Delta_i\in[0,\lambda_i]$, $z\in\mP$, and $\sum_{i=1}^k\Delta_i \leq \Delta_t$. 
It holds that
\begin{eqnarray*}
(y_t - x_t) \cdot \nabla{}f(x_t) &=& \Delta_t(v_t^+ - v_t^-)\cdot\nabla{}f(x_t) \leq \sum_{i=1}^k\Delta_i(v_t^+ - v_t^-) \cdot\nabla{}f(x_t)\\
&\leq & \sum_{i=1}^k\Delta_i(z - v_i) \cdot\nabla{}f(x_t) = (x^*-x_t)\cdot\nabla{}f(x_t) ,
\end{eqnarray*}
where the first inequality follows since $(v_t^+ - v_t^-)\cdot\nabla{}f(x_t) \leq 0$, and the second inequality follows from the optimality of $v_t^+$ and $v_t^-$ (Observation \ref{obsrv:optVminus}). Rearranging, we have that indeed

\begin{eqnarray}\label{eq:lem:errReduction:1}
\left({x_t + \Delta_t(v_t^{+} - v_t^{-})}\right)\cdot\nabla{}f(x_t) \leq x^*\cdot\nabla{}f(x_t) ,
\end{eqnarray}
as needed.

Observe now that from the definition of $\tilde{\eta}_t$ it follows for any $t\geq 1$ that $\frac{\eta_t}{2} \leq \tilde{\eta}_t \leq \eta_t$.
Using the smoothness of $f(x)$ we have that
\begin{eqnarray*}
h_{t+1} &=& f(x_t + \tilde{\eta}_t(v_t^{+} - v_t^{-})) - f(x^*)\\
& \leq & h_t + \tilde{\eta}_t(v_t^{+} - v_t^{-})\cdot\nabla{}f(x_t) + \frac{\tilde{\eta}_t^2\beta}{2}\Vert{v_t^{+} - v_t^{-}}\Vert^2 \\
& \leq & h_t + \tilde{\eta}_t(v_t^{+} - v_t^{-})\cdot\nabla{}f(x_t) + \frac{\tilde{\eta}_t^2\beta{}D^2}{2} \\
& \leq & h_t + \frac{\eta_t}{2}(v_t^{+} - v_t^{-})\cdot\nabla{}f(x_t) + \frac{\eta_t^2\beta{}D^2}{2} \\
&=& h_t + \frac{\eta_t}{2\Delta_t}\left({(x_t + \Delta_t(v_t^{+} - v_t^{-}) - x_t}\right)\cdot\nabla{}f(x_t) + \frac{\tilde{\eta}_t^2\beta{}D^2}{2} \\
&\leq & h_t + \frac{\eta_t}{2\Delta_t}\left({x^* - x_t}\right)\cdot\nabla{}f(x_t) + \frac{\eta_t^2\beta{}D^2}{2} \\
&\leq &h_t - \frac{\eta_t}{2\Delta_t}h_t + \frac{\eta_t^2\beta{}D^2}{2}\\
&= & h_t - \frac{\eta_t\sqrt{\alpha}}{2\sqrt{2\card(x^*)h_t}}h_t + \frac{\eta_t^2\beta{}D^2}{2},
\end{eqnarray*}
where the third inequality follows since $(v_t^{+} - v_t^{-})\cdot\nabla{}f(x_t) \leq 0$, the forth inequality follows from Eq. \eqref{eq:lem:errReduction:1}, the fifth inequality follows from convexity of $f(x)$, and the last equality follows from plugging the value of $\Delta_t$.
\end{proof}

\subsection{Proof of Theorem \ref{thm:main}}

We now turn to prove Theorem \ref{thm:main}. Afterwards, we prove Corollary \ref{corr:main}.
\begin{proof}
We are first going to prove the convergence rate stated in the theorem, assuming that all iterates are feasible. Then we will show that for our choice of step-sizes, indeed the iterates are feasible.
We are going to prove by induction that there exist $c_0,c_1$ such that for all $t\geq 1$ it holds that $h_t \leq c_0(1-c_1)^{t-1}$. Clearly for the base case we must require that $c_0 \geq h_1$.

Suppose now that the induction holds for some $t\geq 1$. Let us set 
\begin{eqnarray}\label{eq:mainthm:eta}
\eta_t = \frac{M_1}{2M_2}\sqrt{c_0}(1-c_1)^{\frac{t-1}{2}}.
\end{eqnarray}

Using Lemma \ref{lem:errReduction} and the induction hypothesis we have that

\begin{eqnarray*}
h_{t+1} &\leq & h_t - \frac{M_1^2}{2M_2}\sqrt{c_0(1-c_1)^{t-1}}h_t^{1/2} + \frac{M_1^2}{4M_2}c_0(1-c_1)^{t-1} \\
&\leq & h_t - \frac{M_1^2}{2M_2}h_t + \frac{M_1^2}{4M_2}c_0(1-c_1)^{t-1} \\
&= & h_t\left({1-  \frac{M_1^2}{2M_2}}\right) + \frac{M_1^2}{4M_2}c_0(1-c_1)^{t-1} \\
&\leq & c_0(1-c_1)^{t-1}\left({1 - \frac{M_1^2}{4M_2}}\right),
\end{eqnarray*}
where the induction hypothesis was used in both the second and third inequalities. In the third inequality we have also used the fact that
\begin{eqnarray}\label{eq:main:1}
\frac{M_1^2}{2M_2} = \frac{\alpha}{8\beta\card(x^*)D^2} < 1,
\end{eqnarray}
where the inequality follows since $\alpha \leq \beta$ and both $\card(x^*),D$ are at least $1$.

Thus, if we set $c_1 = \frac{M_1^2}{4M_2}$, the induction follows. 

We now turn to figure out $c_0$.

Using the smoothness of $f(x)$ and the choice of $x_1$ in Algorithm \ref{alg:newCG}, we have that
\begin{eqnarray*}
h_1 &=& f(x_1) - f(x^*) = f(x_0 + (x_1-x_0)) - f(x^*) \\
&\leq &f(x_0) - f(x^*) + (x_1-x_0)\cdot\nabla{}f(x_0) + \frac{\beta\Vert{x_0-x_1}\Vert^2}{2} \\
&\leq &f(x_0) - f(x^*) + (x^*-x_0)\cdot\nabla{}f(x_0) + \frac{\beta\Vert{x_0-x_1}\Vert^2}{2} \leq \frac{\beta{}D^2}{2},
\end{eqnarray*}
where the last inequality follows from the convexity of $f(x)$.

Thus, we can set $c_0 = \frac{\beta{}D^2}{2} = M_2$, which completes the proof of the convergence rate.

Now, it remains to prove that indeed all iterates are feasible. First note that the sequence $\{\eta_t\}_{t\geq 1}$, as defined in Eq. \eqref{eq:mainthm:eta} is monotonically non-increasing. Furthermore, plugging the values $M_1,M_2,c_0$, we have that
\begin{eqnarray*}
\eta_1 =\frac{M_1\sqrt{c_0}}{2M_2} = \frac{1}{2}\sqrt{\frac{M_1^2}{M_2}} = \frac{1}{2}\sqrt{\frac{\alpha}{4\beta{}D^2\card(x^*)}} \leq 1,
\end{eqnarray*}
where the inequality follows similarly to the one in Eq. \eqref{eq:main:1}.
Thus, our choice of step-size sequence $\{\eta_t\}_{t\geq1}$ satisfies the conditions of Lemma \ref{lem:feasible}, and thus it follows that all iterates of Algorithm \ref{alg:newCG} are feasible.
\end{proof}

We now prove Corollary \ref{corr:main}.
\begin{proof}
Fix an iteration $t$. Using the $\beta$-smoothness of $f(x)$ we have that
\begin{eqnarray*}
\forall\eta\in(0,1]: \quad f(x^*) \leq f(x_t + \eta(v_t^+ - x_t)) \leq f(x_t) + \eta(v_t^+ - x_t)\cdot\nabla{}f(x_t) + \frac{\eta^2\beta{}D^2}{2} .
\end{eqnarray*}
Rearranging we have that
\begin{eqnarray*}
\forall\eta\in(0,1]: \quad g_t = (x_t-v_t^+)\cdot\nabla{}f(x_t) \leq \frac{1}{\eta}h_t + \frac{\eta\beta{}D^2}{2} .
\end{eqnarray*}
Thus, when $\sqrt{\frac{2h_t}{\beta{}D^2}} \leq 1$, we can set $\eta = \sqrt{\frac{2h_t}{\beta{}D^2}}$ in the above inequality, and obtain the corollary.
\end{proof}

\section{Extensions}\label{sec:extension}

In this section we detail two extensions of our result: i) relaxing the specific structure of the polytope $\mP$ considered in Subsection \ref{sec:setting}, and ii) relaxing the strong convexity requirement on the objective function $f(x)$.

\subsection{Extension of Algorithm \ref{alg:newCG} to arbitrary polytopes}

In this subsection we detail how to extend our approach to a broader class of polytopes. While proving rigorous guarantees for this extension is beyond the scope of this paper and left for future work, the encouraging experimental results for Algorithm \ref{alg:newCG} with line-search, suggest that this extended variant, for which line-search is also possible, may also exhibit favorable empirical performance. 
Towards this end, in this subsection we consider minimizing a smooth and strongly-convex function over an arbitrary polytope $\mP$ which we assume is given in the following way:
\begin{eqnarray*}
\mP = \{x\in\reals^n \, | \, A_1x = b_1, \, A_2x \leq b_2\},
\end{eqnarray*}
where $A_2$ is $m\times n$. We assume that given a point $x\in\reals^n$, we have an efficient way to evaluate the vector $A_2x$, which is indeed the case for most structured polytopes of interest.

\begin{algorithm}[H]
\caption{Decomposition-invariant Pairwise Conditional Gradient with Line-search for Arbitrary Polytopes}
\label{alg:newCGext}
\begin{algorithmic}[1]
\STATE let $x_0$ be an arbitrary point in $\mP$
\STATE $x_1 \gets \arg\min_{v\in\mV}v\cdot\nabla{}f(x_0)$
\FOR{$t = 1...$}
\STATE $v^{+}_t \gets \arg\min_{v\in\mV}v\cdot\nabla{}f(x_t)$
\STATE define the vector $c\in\reals^m$ as follows:
\begin{eqnarray*}
c_i := \left\{ \begin{array}{ll}
         0 & \mbox{if $A_2(i)\cdot x_t < b_2(i)$}\\
        \infty & \mbox{if $A_2(i)\cdot x_t = b_2(i)$}\end{array} \right.
\end{eqnarray*}
\STATE $v^{-}_t \gets \arg\min_{v\in\mV}\left({-\nabla{}f(x_t)}\right)\cdot{}v + c^{\top}A_2v$
\STATE $\gamma_t \gets \max\{\gamma \in[0,1] \, | \, A_2(x_t+\gamma(v_t^+-v^{-}_t)) \leq b_2\}$
\STATE $\eta_t \gets \arg\min_{\eta\in[0,\gamma_t]}f(x_t + \eta(v_t^+ - v_t^-))$
\STATE $x_{t+1} \gets x_t + \eta_t(v^{+}_t - v^{-}_t)$
\ENDFOR
\end{algorithmic}
\end{algorithm}

\begin{observation}[optimal away-step for an arbitrary polytope]\label{observ:generalizedAwayStep}
Consider an iteration $t$ of Algorithm \ref{alg:newCGext} and suppose that the iterate $x_t$ is feasible. Let $x_t=\sum_{i=1}^k\lambda_iv_i$ for some integer $k$, be a irreducible way of writing $x_t$ as a convex sum of vertices of $\mP$, i.e., $\lambda_i >0$ for all $i\in[k]$. Then it holds that
\begin{eqnarray*}
\forall i\in[k]: \quad v_i\cdot \nabla{}f(x_t) \leq v_t^- \cdot \nabla{}f(x_t), \qquad \gamma_t > 0 . 
\end{eqnarray*}
Moreover, there exists a convex decomposition of $x_t$ that assigns a weight at least $\gamma_t$ to $v_t^-$.
\end{observation}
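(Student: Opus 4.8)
\textbf{Proof proposal for Observation \ref{observ:generalizedAwayStep}.}

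The plan is to mimic the proof of Observation \ref{obsrv:optVminus}, but now track the inequality constraints $A_2 x \leq b_2$ in place of the non-negativity constraints $x \geq 0$. First I would fix an irreducible convex decomposition $x_t = \sum_{i=1}^k \lambda_i v_i$ with all $\lambda_i > 0$. The key structural fact is that for any row index $j$ with $A_2(j)\cdot x_t = b_2(j)$ (a tight constraint at $x_t$), every vertex $v_i$ appearing in the decomposition must also satisfy $A_2(j)\cdot v_i = b_2(j)$; otherwise, since $A_2(j)\cdot v_i \leq b_2(j)$ for all feasible points and the $\lambda_i$ are strictly positive and sum to $1$, we would get $A_2(j)\cdot x_t = \sum_i \lambda_i A_2(j)\cdot v_i < b_2(j)$, a contradiction. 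Consequently, for each such $v_i$ we have $c^\top A_2 v_i = c^\top A_2 x_t$ (both finite, equal to $\sum_{j : \text{tight}} c_j b_2(j)$, since $c_j = 0$ on non-tight rows and $c_j A_2(j)\cdot v_i = c_j b_2(j)$ on tight rows even when $c_j = \infty$, interpreting $\infty \cdot 0$ as $0$). Hence the modified objective $(-\nabla f(x_t))\cdot v + c^\top A_2 v$ agrees, up to the additive constant $c^\top A_2 x_t$, with $-\nabla f(x_t)\cdot v$ on all of $v_1,\dots,v_k$, while on any vertex violating a tight constraint it is $+\infty$. Since $v_t^-$ minimizes this modified objective over $\mV$ and at least one feasible-for-all-tight-rows vertex exists (namely any $v_i$), $v_t^-$ itself satisfies all tight constraints, and its $-\nabla f(x_t)$-value is at least that of every $v_i$; this gives the first inequality $v_i \cdot \nabla f(x_t) \leq v_t^- \cdot \nabla f(x_t)$ for all $i \in [k]$.

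Next, for $\gamma_t > 0$: since $v_t^-$ satisfies $A_2(j)\cdot v_t^- = b_2(j)$ on every tight row $j$ and $A_2(i)\cdot v_t^- \leq b_2(i)$ on non-tight rows, while $A_2(i)\cdot x_t < b_2(i)$ strictly there, moving from $x_t$ in the direction $v_t^+ - v_t^-$ keeps the tight rows exactly at their bound (the contribution of $v_t^-$ cancels $x_t$'s tightness and $v_t^+$ only helps since $A_2(j)\cdot v_t^+ \leq b_2(j)$), so the tight rows impose no restriction for small $\gamma \geq 0$; the finitely many strictly-slack rows remain satisfied for all sufficiently small $\gamma$ by continuity. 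Hence there is a positive $\gamma$ with $A_2(x_t + \gamma(v_t^+ - v_t^-)) \leq b_2$, i.e. $\gamma_t > 0$.

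For the final claim — that some convex decomposition of $x_t$ puts weight at least $\gamma_t$ on $v_t^-$ — I would invoke Lemma \ref{lem:l1dist}-style reasoning, or more directly Carathéodory-type/Lemma 5.3-of-\cite{GH13}-type decomposition: write $x_t = (x_t - \gamma_t v_t^-) + \gamma_t v_t^-$ and argue that $w := (x_t - \gamma_t v_t^-)/(1-\gamma_t)$ is itself a feasible point of $\mP$ (it clearly satisfies $A_1 w = b_1$; for the inequalities, on tight rows $A_2(j)\cdot(x_t - \gamma_t v_t^-) = (1-\gamma_t) b_2(j)$ so $A_2(j)\cdot w = b_2(j)$, and on slack rows the bound follows from $\gamma_t \leq 1$ and both $x_t, v_t^-$ feasible), hence $w \in \mP$ admits a convex decomposition into vertices, and then $x_t = (1-\gamma_t) w + \gamma_t v_t^-$ expands into a decomposition of $x_t$ with coefficient $\geq \gamma_t$ on $v_t^-$. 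The only subtlety is the degenerate case $\gamma_t = 1$, which is handled trivially since then $x_t = v_t^-$. I expect the main obstacle to be the careful bookkeeping around the symbolic value $c_j = \infty$: one must make precise that the definition of $v_t^-$ as an $\argmin$ effectively means "minimize $-\nabla f(x_t)\cdot v$ subject to $v$ respecting every constraint tight at $x_t$", and check that this feasible subproblem is nonempty (guaranteed by the existence of the $v_i$'s) so that $v_t^-$ is well-defined and lands in the relevant face; everything else is a direct transcription of the non-negativity argument in Observation \ref{obsrv:optVminus}.
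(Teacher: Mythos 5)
Your proposal follows essentially the same route as the paper's proof: (i) use the tight‑constraint structure of $x_t$ to show that the penalty vector $c$ forces $v_t^-$ onto the face of tight inequality constraints, which yields the optimality inequality against every $v_i$; (ii) a continuity/strict‑slack argument for $\gamma_t>0$; (iii) construct the decomposition by showing $x_t-\gamma v_t^-$ lands in $(1-\gamma)\mP$. Your direct treatment of $\gamma_t>0$ in terms of the direction $v_t^+-v_t^-$ is a cosmetic variant of the paper's argument (which works with $x_t-\gamma v_t^-$ and $(1-\gamma)\mP$ and then implicitly uses $A_2v_t^+\leq b_2$), and is arguably cleaner.

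However, there is a genuine gap in your argument for the final ``Moreover'' clause. You write that ``on slack rows the bound follows from $\gamma_t\leq 1$ and both $x_t, v_t^-$ feasible.'' This is not correct. For a slack row $i$ you know $A_2(i)\cdot x_t<b_2(i)$ and $A_2(i)\cdot v_t^-\leq b_2(i)$, and you need
$A_2(i)\cdot x_t-\gamma_t A_2(i)\cdot v_t^-\leq (1-\gamma_t)b_2(i)$,
equivalently $\gamma_t\bigl(b_2(i)-A_2(i)\cdot v_t^-\bigr)\leq b_2(i)-A_2(i)\cdot x_t$. The right-hand side is the slack of $x_t$, the left-hand side is $\gamma_t$ times the slack of $v_t^-$, and there is no a priori reason why $\gamma_t$ times the latter should be bounded by the former. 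Concretely, in the reformulation where $A_2=-I,\,b_2=0$ (the non-negativity constraints), the required inequality becomes $\gamma_t v_t^-(j)\leq x_t(j)$ for every $j$, including coordinates $j$ with $v_t^-(j)=v_t^+(j)=1$; such coordinates impose no upper bound on $\gamma_t$ in its definition (since the direction $v_t^+-v_t^-$ vanishes there) yet they \emph{do} bound the maximal weight any convex decomposition can put on $v_t^-$. So the maximal $\gamma_t$ from the line-search can strictly exceed the largest admissible weight on $v_t^-$, and your step fails. Note the paper's own proof only establishes the decomposition for a ``small enough, yet positive'' $\gamma$, using the fact that every slack row has a strictly positive slack $\epsilon_j$, and it does not actually prove the statement for the maximal line-search value $\gamma_t$ — so your proposal diverges from the paper exactly at the place where you try to strengthen the claim to the algorithm's $\gamma_t$.

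Two smaller remarks on the first part: the parenthetical assertion that $c^\top A_2 v_i$ and $c^\top A_2 x_t$ are ``both finite, equal to $\sum_{j:\text{tight}} c_j b_2(j)$'' is not literally correct when some tight row has $b_2(j)\neq 0$ (then the sum involves $\infty\cdot b_2(j)$). The paper sidesteps this by adding the constant $c^\top b_2$ and reading $c^\top(b_2-A_2 v)$ as $0$ if $v$ is tight on every tight row of $x_t$ and $+\infty$ otherwise, which is the cleaner way to make the ``hard constraint via $\infty$ penalty'' rigorous. Your structural observation that every $v_i$ in an irreducible decomposition is tight wherever $x_t$ is tight, and your conclusion about $v_t^-$'s optimality over that face, are otherwise correct and match the paper.
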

\begin{proof}
Let $x_t = \sum_{i=1}^k\lambda_iv_i$ be a decomposition of $x_t$ into vertices of $\mP$ such that $\lambda_i > 0$ for all $i\in[k]$. Observe that for any $j\in[m]$ and $i\in[k]$ it holds that $A_2(j)\cdot x_t = b_2(j) \Rightarrow A_2(j)\cdot v_i = b_2(j)$.
Note that by definition of the vector $c$ and $v_t^-$ it holds that
\begin{eqnarray}\label{eq:extend:1}
v_t^- &\in & \arg\max_{v\in\mV}\nabla{}f(x_t)\cdot v - c^{\top}A_2v \equiv \arg\max_{v\in\mV}\nabla{}f(x_t)\cdot v + c^{\top}(b_2 - A_2v) \nonumber \\
&\equiv& {\arg\max}_{v\in\{y\in\mV \, | \, \forall j\in[m]: \, A_2(j)\cdot x_t = b_2(j) \Rightarrow A_2(j)\cdot y = b_2(j)\}}v\cdot\nabla{}f(x_t) .
\end{eqnarray}
Thus, it follows that for all $i\in[k]$, $v_t^-\cdot\nabla{}f(x_t) \geq v_i\cdot\nabla{}f(x_t)$.

In order to prove the second part of the observation, we note that from the RHS of Eq. \eqref{eq:extend:1} it follows that there exists $\gamma_t > 0$ such that indeed $x_t - \gamma_tv_t^- \leq (1-\gamma_t)b_2$. To see this, consider some $j\in[m]$. If $A_2(j)\cdot{}x_t = b_2(j)$, then from the RHS of Eq. \eqref{eq:extend:1}, it follows that $A_2(j)\cdot{}v_t^- = b_2(j)$ and thus, for any $\gamma_t$ it holds that $A_2(j)\cdot(x_t - \gamma_tv_t^-) = (1-\gamma_t)b_2(j)$. Otherwise, there exists some $\epsilon_j > 0$ such that $A_2(j)\cdot{}v_t^- \leq b_2(j) - \epsilon_j$. Thus, for small enough, yet positive $\gamma_t$ we will have that $A_2(j)\cdot(x_t - \gamma_tv_t^-) \leq (1-\gamma_t)b_2(j)$.
Since it clearly also holds that $A_1(x_t - \gamma_tv_t^-) = (1-\gamma_t)b_1$, we have that the vector $w_t:=x_t - \gamma_tv_t^-$ satisfies: $w_t\in(1-\gamma_t)\mP$. Hence, $w_t$ can be decomposed as $w_t = \sum_{i=1}^q\tilde{\gamma}_i\tilde{v}_i$, where $q$ is a positive integer and for all $i\in[q]$, $\tilde{\lambda}_i > 0$, $\tilde{v}_i$ is a vertex of $\mP$, and $\sum_{i=1}^q\tilde{\lambda}_i = 1-\gamma_t$. Thus, since $v_t^-$ is a vertex of $\mP$, it follows that $x_t = w_t + \gamma_tv_t^-$ admits the convex decomposition $\sum_{i=1}^q\tilde{\lambda}_i\tilde{v}_i + \gamma_tv_t^-$, as needed.
\end{proof}

The following lemma is an immediate consequence of the choice of $\gamma_t$ in  Algorithm \ref{alg:newCGext}.
\begin{lemma}
The iterates of Algorithm \ref{alg:newCGext} are always feasible.
\end{lemma}

\subsection{Relaxing the strong convexity of the objective function}
Until now we have assumed that the objective function $f$ is strongly convex. However, as can be observed from our analysis, the only consequence of strong convexity that we relied on in our analysis, is Eq. \eqref{eq:strongconvexdist}. Indeed, there exist functions which are not strongly convex, that under certain conditions, still satisfy Eq. \eqref{eq:strongconvexdist}, and thus are compatible with our method and analysis.

Following the work of Beck and Shtern \cite{Beck15}, we can consider a broader class of objective functions, namely functions that take the following form:
\begin{eqnarray}\label{eq:relaxFunc}
f(x) = g(Ax) + b\cdot x,
\end{eqnarray}
where $A\in\reals^{m\times n}$, and $g:\reals^m\rightarrow\reals$ is smooth and strongly convex.

In \cite{Beck15} (Lemma 2.5) it was shown, using an application of Hoffman's lemma, that there exists a constant $\kappa$ which depends both on the condition number of $g$ and the parameters $A,b$, such that for any feasible point $x$, it holds that
\begin{eqnarray}\label{eq:extStrongConvDist}
\min_{y\in{}\mP^*}\Vert{x-y}\Vert^2 \leq \kappa\left({f(x) - f^*}\right),
\end{eqnarray}
where $\mP^*\subset\mP$, is the set of all feasible points that minimize $f(x)$ over $\mP$, and $f^*$ is the minimum value of $f(x)$ over $\mP$.

It is easy to verify that Eq.~\eqref{eq:extStrongConvDist} can be readily used in our analysis instead of Eq.~\eqref{eq:strongconvexdist}, and thus our results extend to handle objectives of the form given in Eq. \eqref{eq:relaxFunc}.

We note that now, the dependency in our analysis and in Theorem \ref{thm:main} on the strong convexity parameter $\alpha$ will be replaced with $\kappa$, and the dependency on $\card(x^*)$ will be replaced with $\max_{y\in\mP^*}\card(y)$.


\section{Lower Bound for Problems with a Sparse Solution}\label{sec:lowerbound}

In this section we present a simple lower bound on the approximation error of, informally speaking, any natural conditional gradient variant that when initialized with a vertex of the feasible set, its iterate after $t$ iterations admits a convex decomposition into at most $t+1$ vertices of the polytope. That is, on each iteration, at most a single new vertex is added to the decomposition. The lower bound shows that there exists a $1$-smooth and $1$-strongly convex function $f$, for which, any such CG variant which is applied to the minimization of $f$ over the unit simplex, must take $\Omega(\card(x^*))$ steps before entering the linear convergence regime. To date, none of the previous analyses of linearly converging CG variants matches this lower bound since, in this exact setting, they all require, in worst-case, $\Omega(n)$ steps before entering the linear convergence regime, i.e., number of steps that is independent of $\card(x^*)$. 

To the best of our knowledge, Algorithm \ref{alg:newCG} and the corresponding Theorem \ref{thm:main} are the first to match this lower bound. We emphasize that the idea behind the construction of this lower bound is well known and follows almost immediately from previous constructions, such as those in \cite{Jaggi13b,GH13}.

\begin{lemma}
Fix an even integer $k\in[n]$, and consider the optimization problem
\begin{eqnarray*}
\min_{x\in\mS_n}\{f(x) := \frac{1}{2}\Vert{x - \frac{1}{k}\onesvec_k}\Vert^2\},
\end{eqnarray*}
where $\mS_n$ denotes the unit simplex in $\reals^n$, i.e., $\mS_n := \{x\in\reals^n \, | \, x\geq 0, \, \Vert{x}\Vert_1 =1\}$, and $\onesvec_k$ is a vector in $\reals^n$, defined as:
\begin{eqnarray*}
\onesvec_k(i) = \left\{ \begin{array}{ll}
         1 & \mbox{if $1\leq i \leq k$}\\
        0 & \mbox{else}\end{array} \right. 
\end{eqnarray*}
Observe that $x^* = \frac{1}{k}\onesvec_k$ is the unique minimizer of $f$ over $\mS_n$.
Then, any point $x\in\mS_n$, for which it holds that $\card(x) \leq k/2$ satisfies:
\begin{eqnarray*}
f(x) - f(x^*) \geq \frac{1}{4k} .
\end{eqnarray*}
\end{lemma}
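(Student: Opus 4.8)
The plan is to bound $f(x) - f(x^*)$ from below for any $x \in \mS_n$ with $\card(x) \leq k/2$, using the explicit quadratic structure of $f$. Since $f(x) = \frac{1}{2}\Vert x - x^*\Vert^2$ and $x^*$ is the minimizer with $f(x^*) = 0$, the quantity $f(x) - f(x^*)$ is exactly $\frac{1}{2}\Vert x - x^*\Vert^2$, so it suffices to lower bound the squared Euclidean distance $\Vert x - x^*\Vert^2$ when $x$ is supported on at most $k/2$ coordinates. Actually it is cleaner to note $f$ is $1$-strongly convex so $f(x)-f(x^*) \geq \frac12\Vert x-x^*\Vert^2$ directly, and then estimate the distance.

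The key observation is that $x^*$ puts mass $1/k$ on each of the first $k$ coordinates. If $x$ has support of size at most $k/2$, then at least $k/2$ of the first $k$ coordinates have $x_i = 0$ while $x^*_i = 1/k$. Hence
\begin{eqnarray*}
\Vert x - x^*\Vert^2 \;\geq\; \sum_{\substack{i \in [k] \\ x_i = 0}} (x_i - x^*_i)^2 \;\geq\; \frac{k}{2}\cdot\frac{1}{k^2} \;=\; \frac{1}{2k}.
\end{eqnarray*}
Combining with $f(x) - f(x^*) \geq \frac12\Vert x - x^*\Vert^2$ (or the exact identity) gives $f(x) - f(x^*) \geq \frac{1}{4k}$, as claimed. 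The preliminary remark that $x^* = \frac1k\onesvec_k$ is indeed the unique minimizer follows because $f$ is strongly convex and $x^*$ lies in the relative interior of the simplex face spanned by $e_1,\dots,e_k$, with $\nabla f(x^*) = 0$.

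There is essentially no obstacle here — the argument is a short counting-plus-Pythagoras estimate. The only point requiring a hair of care is the bookkeeping: verifying that "$\card(x) \leq k/2$" forces at least $k/2$ of the coordinates in $[k]$ to vanish (since $x$ can place its support entirely within $[k]$ in the worst case, leaving $k - \card(x) \geq k/2$ zeros among the first $k$ entries), and that we only need the contribution of those zeroed coordinates to the distance, discarding all other (nonnegative) terms. Everything else is direct substitution.
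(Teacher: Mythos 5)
Your proof is correct and follows essentially the same route as the paper: both observe that $f(x^*)=0$ so $f(x)-f(x^*)=\tfrac12\Vert x-x^*\Vert^2$, count that a support of size at most $k/2$ forces at least $k/2$ zeros among the first $k$ coordinates, each contributing $(1/k)^2$ to the squared distance, and conclude $f(x)-f(x^*)\geq\tfrac12\cdot\tfrac{k}{2}\cdot\tfrac{1}{k^2}=\tfrac{1}{4k}$. The only cosmetic difference is that you also invoke $1$-strong convexity as an alternative to the exact identity, which is harmless but unnecessary.
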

\begin{proof}
Fix a point $x\in\mS_n$ for which it holds that $\card(x) \leq k/2$. In order to lower bound the approximation error of $x$, it suffices to consider the entries which are zero for $x$ and non-zero for $x^*$. Thus, we have that
\begin{eqnarray*}
f(x) \geq \frac{1}{2}\cdot\frac{k}{2}\cdot\left({0- \frac{1}{k}}\right)^2 = \frac{1}{4k}.
\end{eqnarray*}
\end{proof}

\section{Experiments}\label{sec:experiments}
In this section we illustrate the performance of our algorithm in numerical experiments.
We use the two experimental settings from \cite{lacoste2015linear_fw}, which include a constrained Lasso problem and a video co-localization problem. In addition, we test our algorithm on a learning problem related to an optical character recognition (OCR) task from \cite{taskar03}.
In each setting we compare the performance of our algorithm (DICG) to standard conditional gradient (CG), as well as to the fast away (ACG) and pairwise (PCG) variants \cite{lacoste2015linear_fw}. For the baselines in the first two settings we use the publicly available code from \cite{lacoste2015linear_fw}, to which we add our own implementation of Algorithm \ref{alg:newCG}. Similarly, for the OCR problem we extend code from \cite{osokin16}, kindly provided by the authors.
For all algorithms we use line-search to set the step size.

\paragraph{Lasso}
In the first example the goal is to solve the problem: $\min_{x\in{\cal{M}}} \|\bar{A}x-\bar{b}\|^2$, where $\cal{M}$ is a scaled $\ell_1$ ball.
Notice that the constraints $\cal{M}$ do not match the required structure of $\mP$, however, with a simple change of variables we can obtain an equivalent optimization problem over the simplex.
We generate the random matrix $\bar{A}$ and vector $\bar{b}$ as in \cite{lacoste2015linear_fw}.
In Figure \ref{fig:all} (left, top) we observe that our algorithm (DICG) converges similarly to the pairwise variant PCG and faster than the other baselines. This is expected since the away direction $v^-$ in DICG (Algorithm \ref{alg:newCG}) is equivalent to the away direction in PCG (Algorithm \ref{alg:cga}) in the case of simplex constraints.

\begin{figure*}[t]
 \begin{center}
  \begin{tabular}{c c c}
  Lasso & Video co-localization & OCR \\
  \hspace{-15px}
  \includegraphics[height=1.3in]{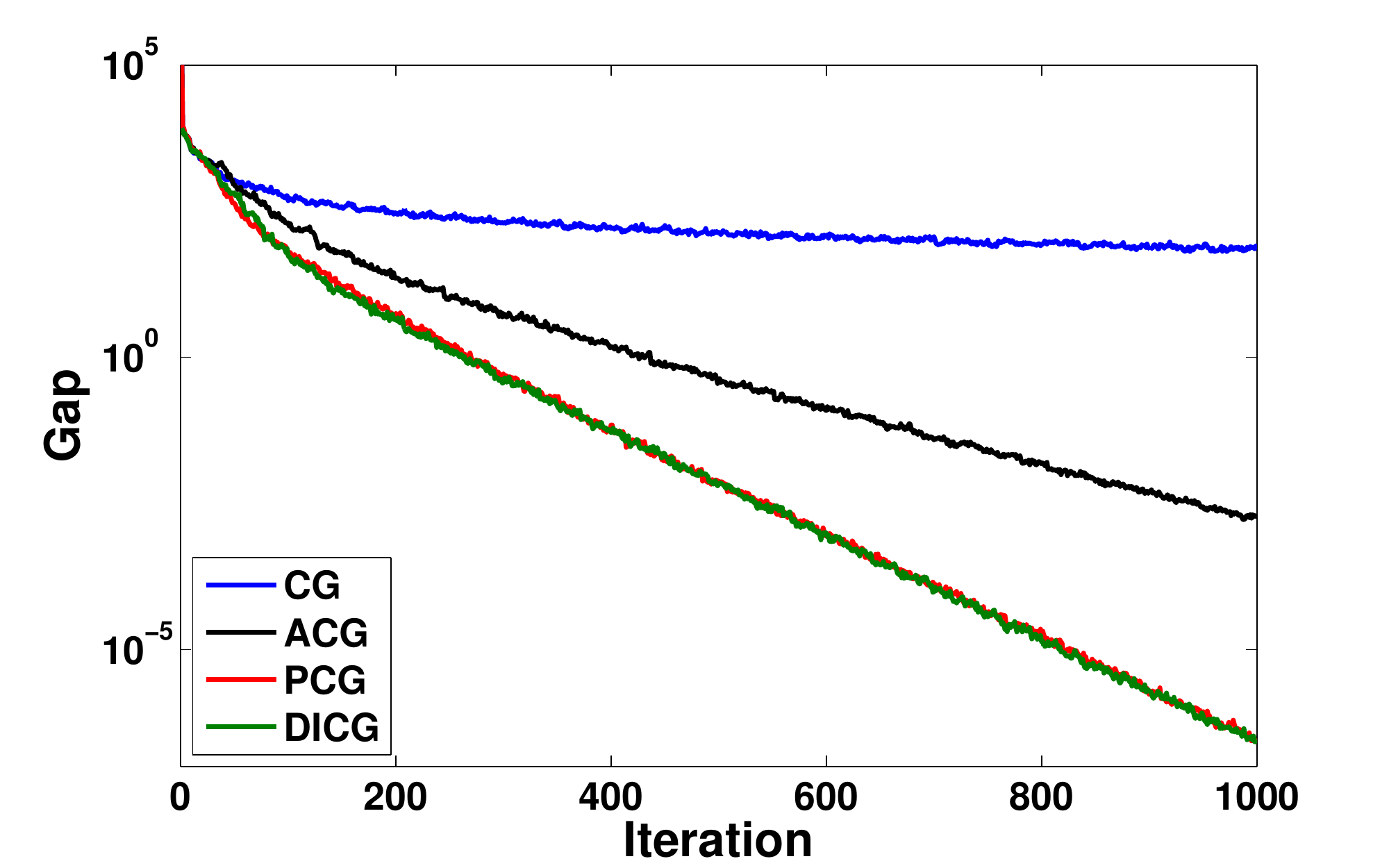}
  &
  \hspace{-20px}
  \includegraphics[height=1.3in]{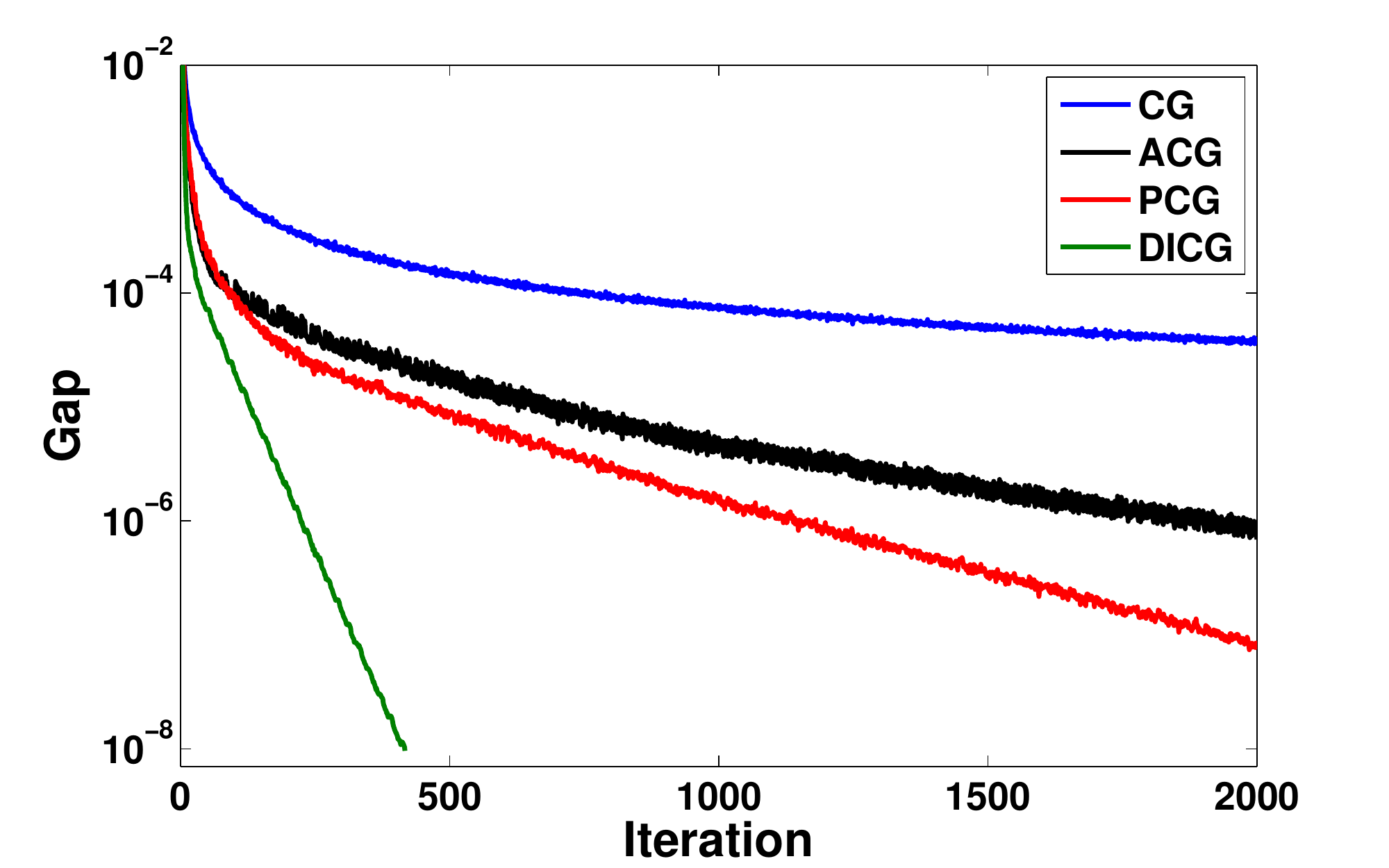}
  &
  \hspace{-20px}
  \includegraphics[height=1.3in]{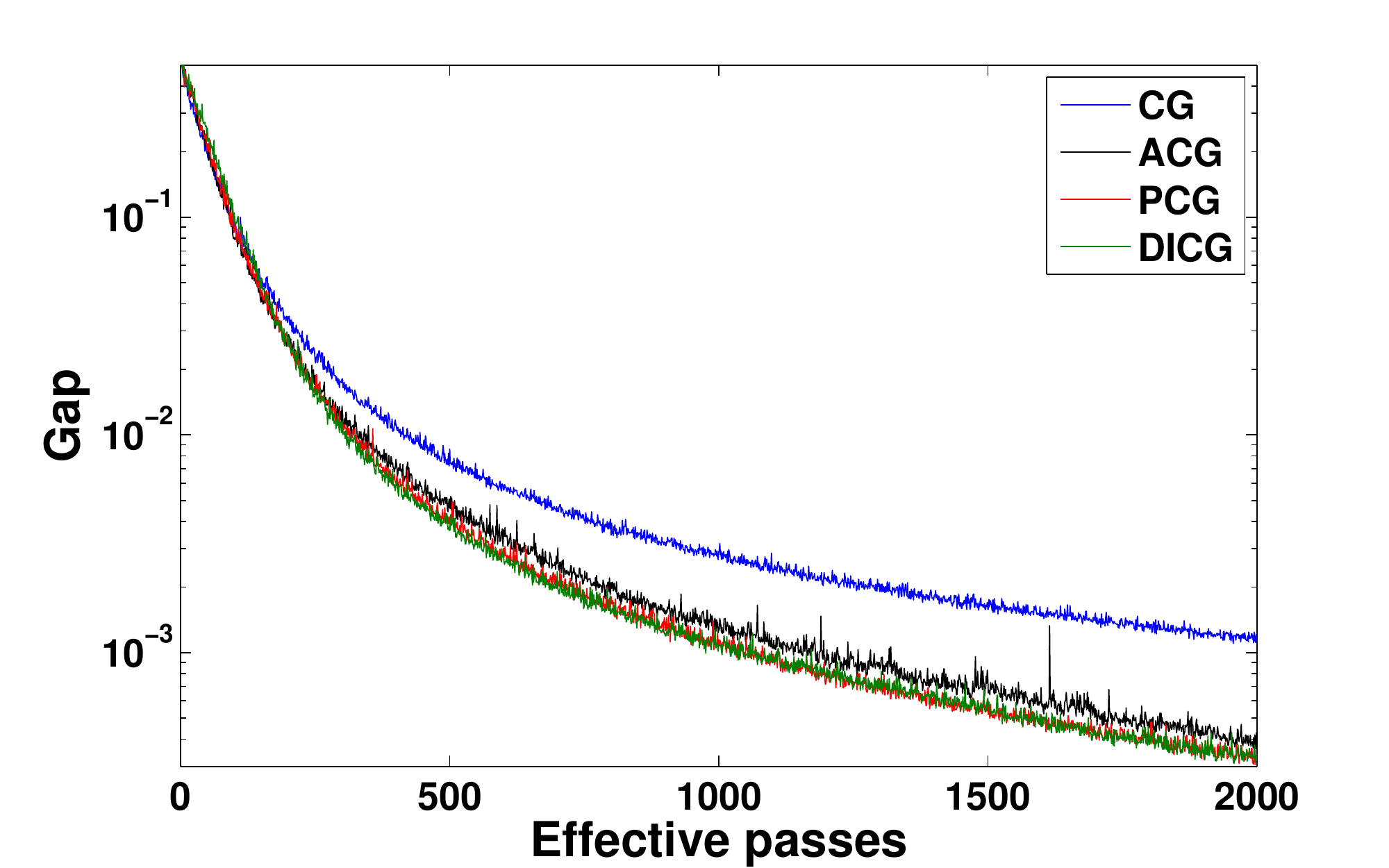}
  \\
  \hspace{-15px}
  \includegraphics[height=1.3in]{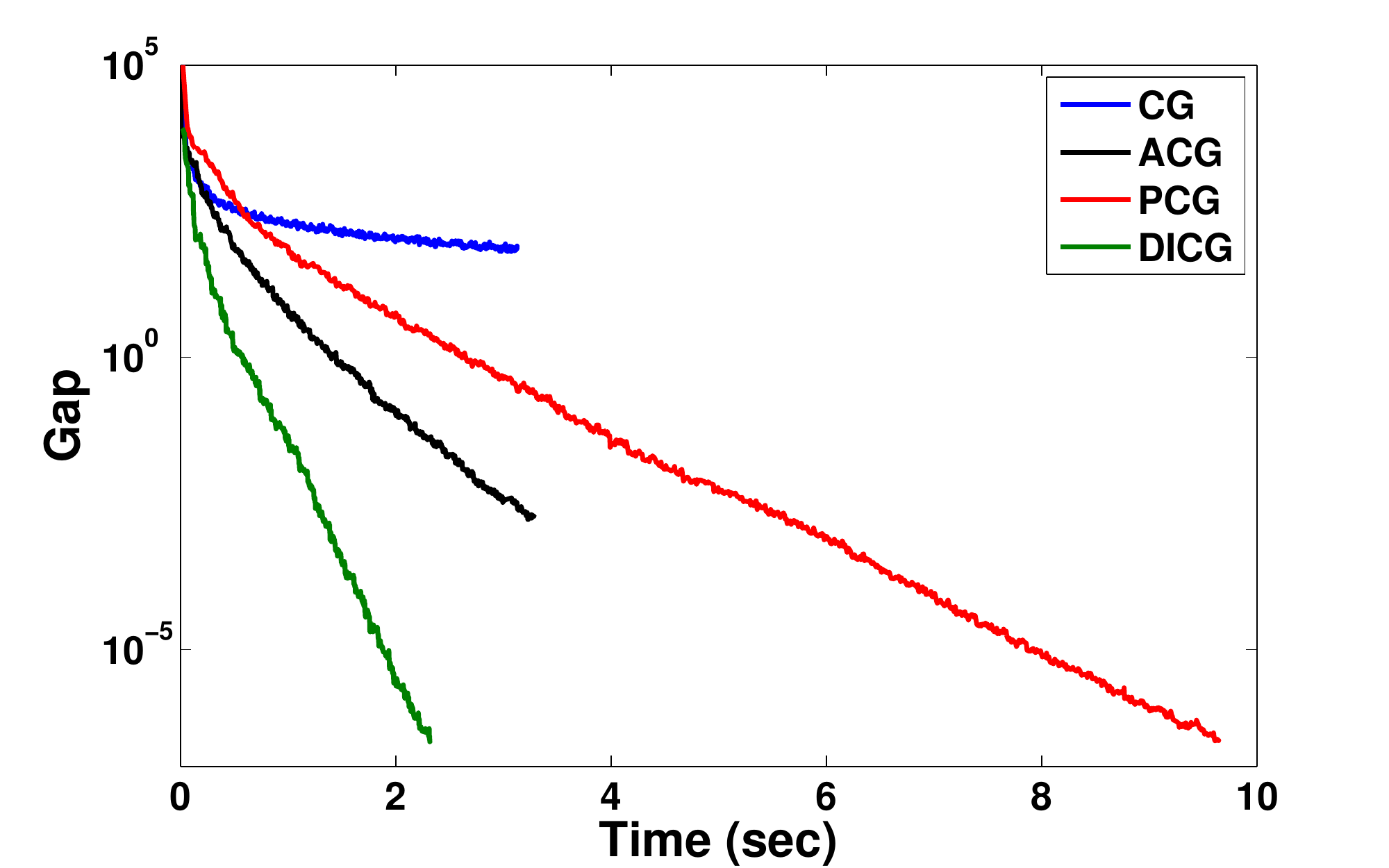}
  &
  \hspace{-20px}
  \includegraphics[height=1.3in]{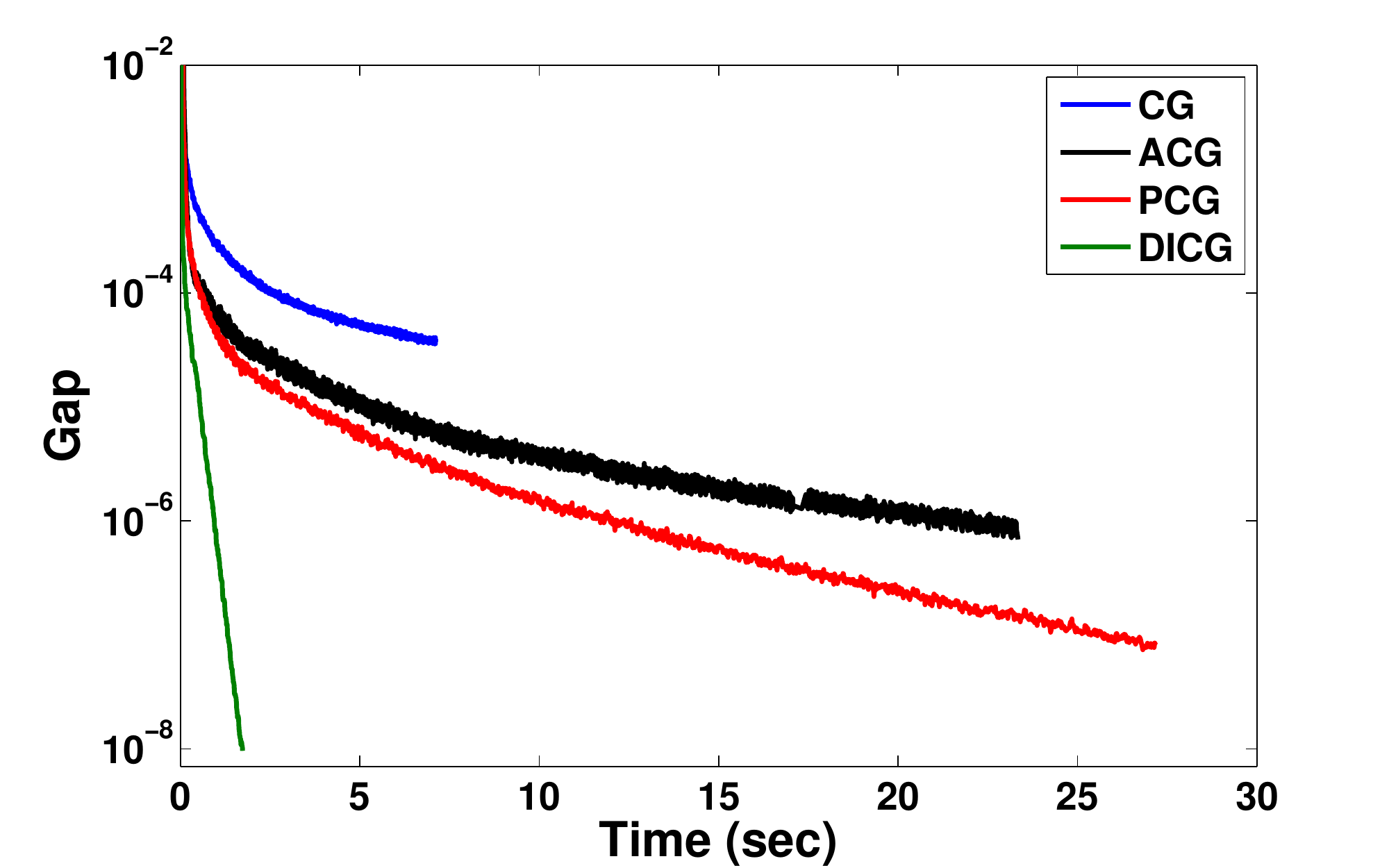}
  &
  \hspace{-20px}
  \includegraphics[height=1.3in]{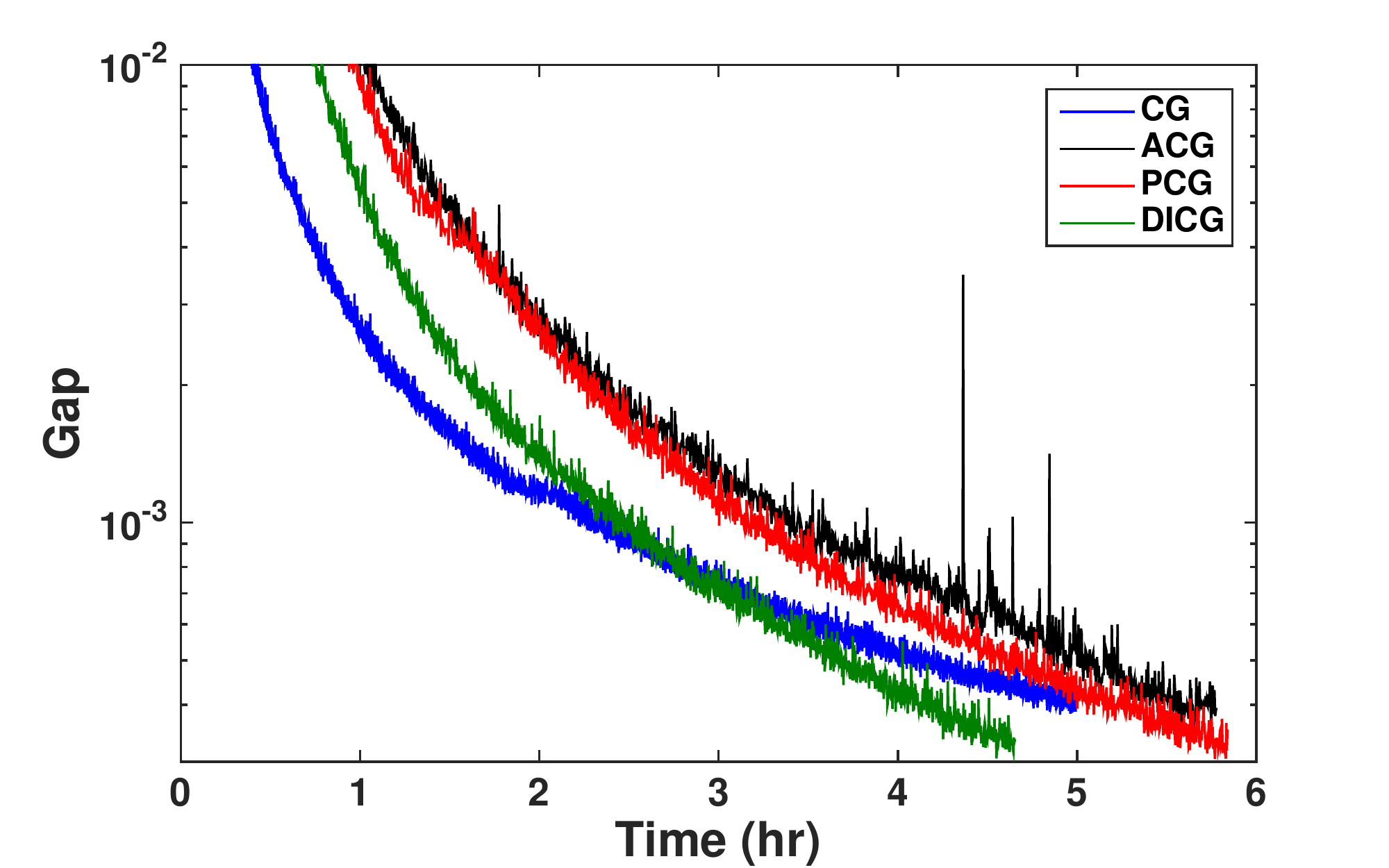}
  \end{tabular}
 \end{center}
\vspace{-4mm}
 \caption{{Duality gap $g_t$ vs.~iterations (top) and time (bottom) in various settings.}} 
  \label{fig:all}
\vspace{-0.4cm}
\end{figure*}

\paragraph{Video co-localization}
The second example is a quadratic program over the flow polytope, originally proposed in \cite{joulin2014fw}.
This is an instance of $\mP$ that is mentioned in Section \ref{sec:polyExample} in the appendix.
As can be seen in Figure \ref{fig:all} (middle, top), in this setting our proposed algorithm significantly outperforms the baselines, as a result of finding a better away direction $v^-$.
Figure \ref{fig:all} (middle, bottom) shows convergence on a time scale, where the difference between the algorithms is even larger.
One reason for this difference is the costly search over the history of vertices maintained by the baseline algorithms.
Specifically, the number of stored vertices grows fast with the number of iterations and reaches 1222 for away steps and 1438 for pairwise steps (out of 2000 iterations).


\paragraph{OCR}
We next conduct experiments on a structured SVM learning problem resulting from an OCR task.
The constraints in this setting are the marginal polytope corresponding to a chain graph over the letters of a word (see \cite{taskar03}), and the objective function is quadratic.
Notice that the marginal polytope has a concise characterization in this case and also satisfies our assumptions (see Section \ref{sec:polyExample} in the appendix for more details).
For this problem we actually run Algorithm \ref{alg:newCG} in a block-coordinate fashion, where blocks correspond to training examples in the dual SVM formulation \cite{Jaggi13a,osokin16}.
In Figure \ref{fig:all} (right, top) we see that our DICG algorithm is comparable to the PCG algorithm and faster than the other baselines on the iteration scale. Figure \ref{fig:all} (right, bottom) demonstrates that in terms of actual running time we get a noticeable speedup compared to all baselines. We point out that for this OCR problem, both ACG and PCG each require about 5GB of memory to store the explicit decomposition in the implementation that we used, so using DICG instead results in significant memory savings.


\bibliographystyle{plain}
\bibliography{bib}
\end{document}